\newtheorem{lemma}{Lemma}
\newtheorem{theorem}{Theorem}
\newtheorem{proposition}{Proposition}
\theoremstyle{definition}
\newtheorem{definition}{Definition}
\newtheorem{example}{Example}
\newtheorem{remark}{Remark}
\numberwithin{equation}{section}
\def\I{{{\mathbb I}}}
\def\E{{{\mathbb E }}}
\def\Z{{{\mathbb Z }}}
\def\P{{{\mathbb P }}}
\def\L{{{\mathbb L }}}
\def\N{{{\mathbb N }}}
\def\CC{{{\mathcal C}}}
\def\EE{{{\mathcal E}}}
\def\FF{{{\mathcal F}}}
\def\LL{{{\mathcal L}}}
\def\MM{{{\mathcal M}}}
\def\NN{{{\mathcal N}}}
\def\OO{{{\mathcal O}}}
\def\WW{{{\mathcal W}}}
\def\EExt{{{\mathcal E}xt}}
\def\TTor{{{\mathcal T}or}}
\def\FFitt{{{\mathcal F}itt}}
\def\Hom{{{\rm Hom }}}
\def\Spec{{{\rm Spec \,}}}
\def\Proj{{{\rm Proj \,}}}
\def\Hilb{{{\rm Hilb \,}}}
\def\Univ{{{\rm Univ \,}}}
\def\Quot{{{\rm Quot \,}}}
\def\Supp{{{\rm Supp \,}}}
\def\Pic{{{\rm Pic \,}}}
\def\dim{{{\rm dim \,}}}
\def\hd{{{\rm hd \,}}}
\def\coker{{{\rm coker \,}}}
\def\ker{{{\rm ker \,}}}
\def\rank{{{\rm rank \,}}}
\def\im{{{\rm im \,}}}
\def\id{{{\rm id }}}
\def\length{{{\rm length \,}}}
\def\tors{{{\rm tors \,}}}
\def\red{{{\rm red}}}
\def\Ass{{{\rm Ass\,}}}
\begin{document}
\renewcommand{\refname}{References}
\renewcommand{\proofname}{Proof.}
\thispagestyle{empty}

\title[Morphism of compactifications of moduli of
vector bundles]{On a morphism of compactifications of moduli
scheme of vector bundles}
\author{{N.V.Timofeeva}}%
\address{Nadezda Vladimirovna Timofeeva
\newline\hphantom{iii} Yaroslavl State University,
\newline\hphantom{iii} ul. Sovetskaya, 14,
\newline\hphantom{iii} 150000, Yaroslavl, Russia}%
\email{ntimofeeva@list.ru}%
\vspace{1cm} \maketitle {\small
\begin{quote}
\noindent{\sc Abstract. } A morphism of nonreduced Gieseker --
Maruyama  functor (of semistable coherent torsion-free sheaves) on
the surface
 to the non\-reduced
 functor   of admissible semistable pairs with the same
Hilbert polynomial, is constructed. This leads to the morphism of
moduli schemes with possibly non\-reduc\-ed scheme struct\-ures.
As usually, we study sub\-functors corresponding to main
components of moduli schemes.\medskip

\noindent{\bf Keywords:} moduli space, semistable coherent
sheaves, moduli functor, algebraic surface.
 \end{quote}
%}

\begin{flushright}
{\it To the blessed memory of my Mum}
\end{flushright}

\section*{Introduction}
The purpose of the present paper is to construct a morphism of
main components of Gieseker -- Maruyama  moduli scheme $\overline
M$ of semistable (in the sense of Gieseker -- cf. sect. 1,
definition \ref{defgies}) torsion-free coherent sheaves of fixed
rank and with fixed Hilbert polynomial on a smooth projective
surface, to main components of the moduli scheme $\widetilde M$ of
semistable admissible pairs (cf. sect. 1, definitions \ref{admsch}
and \ref{sstpair}) with same rank and Hilbert polynomial, which
were built up in the series of papers of the author \cite{Tim1} -
\cite{Tim6}. In \cite{Tim4} the construction of a morphism
$\kappa_{\red}: \overline M_{\red} \to \widetilde M_{\red}$ of
same schemes was done but both schemes were considered with
reduced scheme structures. This restriction (the absence of
nilpotent elements in structure sheaves) is essential for the
construction performed in the cited paper. In the present article
we remove this restriction and prove the existence of a morphism
$\kappa: \overline M \to \widetilde M$. The morphism
$\kappa_{\red}$ from \cite{Tim4} is the reduction of $\kappa$ (in
the cited paper the morphism $\kappa_{\red}$ was denoted by
$\kappa$, $\overline M_{\red}$ and $\widetilde M_{\red}$ by
$\overline M$ and $\widetilde M$ respectively). In this way we
give an answer to the open question announced in \cite[remark
3]{Tim6}.

We work on a smooth irreducible projective algebraic surface $S$
over a field  $k=\overline k$ of characteristic zero. On $S$ an
ample invertible sheaf  $L$ is chosen and fixed. It is used as a
polarization. The Hilbert polynomial of the coherent sheaf $E$ of
$\OO_S$-modules having rank $r$, is denoted as $rp(n)$ and is
defined by the formula $rp(n)= \chi(E \otimes L^n)$. Reduced
Hilbert polynomial $p(n)$ of the sheaf $E$ is a polynomial with
rational coefficients. These coefficients depend on the geometry
of the surface $S,$ the polarization $L$ and on Chern classes
$c_1, c_2$ of the sheaf $E$.

The scheme of moduli of Gieseker -- Maruyama $\overline M$ being a
Noetherian projective algebraic scheme of finite type
\cite{Gies},\cite{Mar}, is a classical way to compactify the
moduli space $M_0$ of stable vector bundles with same rank and
Hilbert polynomial.  The scheme $M_0$ is reduced and
quasi-projective (it is a quasi-projective algebraic variety --
\cite{Mar}). In the construction of the scheme $\overline M$
families of locally free stable sheaves are completed by
(possibly, nonlocally free) semistable coherent torsion-free
sheaves with same rank and Hilbert polynomial on the surface $S$
with polarization $L$.

In cited papers \cite{Tim1, Tim2, Tim4} we developed a procedure
to transform a flat family $\E_T$ of semistable coherent
torsion-free sheaves on the surface $S$, parametrized by
irreducible and reduced scheme $T$, to the family
$((\pi:\widetilde \Sigma \to \widetilde T, \widetilde \L),
\widetilde \E)$ of semistable admissible pairs, parameterized by
reduced irreducible scheme $\widetilde T$ which is birational to
$T$. Since we are interested namely in compactifications of moduli
space of stable vector bundles, the family $\E_T$ is thought to
contain at least one locally free sheaf. Since the requirement of
local freeness (as well as requirement of Gieseker-stability --
cf. \cite[propos. 2.3.1]{HL}) are open in flat families, the base
scheme $T$ contains an open subscheme $T_0$ whose closed points
correspond to locally free sheaves. In the compactification we
built, nonlocally free sheaves in points of closed subscheme $T
\setminus T_0$ are replaced by pairs $((\widetilde S, \widetilde
L), \widetilde E)$ in points of closed subscheme $\widetilde T
\setminus \widetilde T_0$, $\widetilde T_0 \cong T_0$, where
$\widetilde S$ is projective algebraic scheme of certain form with
appropriate polarization $\widetilde L,$ and $\widetilde E$ is
appropriate locally free sheaf. As shown in \cite{Tim3}, schemes
$\widetilde S$ are connected, and hence rank $r$ of a locally free
sheaf $\widetilde E$ on such scheme is well-defined. It is equal
to the rank of restriction of the sheaf $\widetilde E$ to each
component of $\widetilde S.$ The Hilbert polynomial of
$\OO_{\widetilde S}$-sheaf $\widetilde E$ is defined in usual way:
$rp(n)=\chi(\widetilde E \otimes \widetilde L^n)$. The precise
description of pairs $((\widetilde S, \widetilde L), \widetilde
E)$ will be given below, sect.1, definitions \ref{admsch} and
\ref{sstpair}.

The mentioned procedure of transformation of a flat family of
semistable torsion-free sheaves to a flat family of admissible
semistable pairs is called a standard resolution. It gives rise to
a birational morphism of base schemes $\widetilde T \to T$ which
becomes an isomorphism when restricted to the preimage $\widetilde
T_0$ of open subscheme $T_0$ of locally free sheaves. To perform a
standard resolution as it developed in articles \cite{Tim1, Tim2,
Tim4}, one needs irreducibility and reducedness of the base scheme
$T$.

It is known \cite{O'Gr, O'Gr1, Gies-Li} that for arbitrary surface
$S$ the scheme $\overline M$ is asymptotically (in particular, for
big values of $c_2$) reduced, irreducible and of expected
dimension. Although under arbitrary choice of numerical invariants
of sheaves this scheme can be nonreduced.

In this article  we develop %(\S \ref{morfun})
the version of the standard resolution for the family of
semistable coherent torsion-free sheaves for the case when the
base scheme is nonreduced. For our considerations it is enough to
restrict ourselves by the class of schemes $T$ such that their
reductions $T_{\red}$ are irreducible schemes.

Using our version of standard resolution we construct the natural
transformation of the Gieseker -- Maruyama functor (sect. 1,
(\ref{funcGM}),(\ref{famGM})) to the functor $\mathfrak f$ of
admissible semistable pairs (sect. 1, (\ref{class}),
(\ref{funcmy})). The natural transformation leads to the morphism
of moduli schemes.

The article consists of four sections. In sect. 1 we recall the
basic notions which are necessary for the further considerations.
Therein we give a standard description how the morphism of moduli
functors determines morphism of their moduli schemes. Sect. 2 is
devoted to procedure of standard resolution of a family of
coherent sheaves with non-reduced base. Sect. 3 contains the
construction of the natural transformation ${\mathfrak f}^{GM}\to
{\mathfrak f}$ using standard resolution from sect. 2. In
addition, in sect. 4 we obtain the morphism of moduli schemes of
functors of our interest induced by the natural transformation
${\mathfrak f}^{GM} \to {\mathfrak f}$, "by hands", without
category-theoretical constructions.

In the present article we prove the following result.
\begin{theorem} The Gieseker -- Maruyama functor
${\mathfrak f}^{GM}$ of semistable torsion-free coherent sheaves
of rank $r$ and with Hilbert polynomial $rp(n)$ on the surface
$(S,L)$, has a natural transformation to the functor ${\mathfrak
f}$ of admissible semistable pairs $((\widetilde S,\widetilde L),
\widetilde E)$ where the locally free sheaf $\widetilde E$ on the
projective scheme  $(\widetilde S, \widetilde L)$ has same rank
and Hilbert polynomial. In particular, there is a morphism of
moduli schemes $\overline M \to \widetilde M$ associated with this
natural transformation.
\end{theorem}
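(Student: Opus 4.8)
The plan is to construct the natural transformation ${\mathfrak f}^{GM}\to{\mathfrak f}$ one family at a time, by means of the nonreduced standard resolution to be developed in Section~2, and then to deduce the morphism $\overline M\to\widetilde M$ as in Section~4 by assembling the maps obtained on local charts of $\overline M$. First I would fix the description of the two functors recalled in Section~1: ${\mathfrak f}^{GM}$ assigns to a $k$-scheme $T$ the set of classes, modulo twist by a line bundle pulled back from $T$, of flat families $\E_T$ of semistable torsion-free coherent sheaves of rank $r$ and Hilbert polynomial $rp(n)$ on $S\times T$, while ${\mathfrak f}$ assigns the analogous set of classes of flat families $((\pi:\widetilde\Sigma\to T,\widetilde\L),\widetilde\E)$ of admissible semistable pairs; as in the Introduction I would restrict to bases $T$ with $T_{\red}$ irreducible, which is what singles out the main components. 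With this in place, the passage from a natural transformation of such functors to a morphism of the corepresenting moduli schemes is the formal device recalled in Section~1.

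The heart of the argument is the following assignment. Given $\E_T\in{\mathfrak f}^{GM}(T)$ with $T_{\red}$ irreducible, I would form the zeroth Fitting ideal sheaf $\FFitt^0(\E_T)$ on $S\times T$, whose support is the non-locally-free locus of $\E_T$ and which, by openness of local freeness, is disjoint from $S\times T_0$; then I would blow up $S\times T$ along $\FFitt^0(\E_T)$ and carry out the further normalizations and blow-ups dictated by the procedure of Section~2, obtaining a projective scheme $\widetilde\Sigma$ over $T$ together with a relative polarization $\widetilde\L$ and a locally free sheaf $\widetilde\E$ of rank $r$ which over $T_0$ recovers the family one started with. The task of Section~2 is to prove that this output is flat over $T$ and that every geometric fibre is an admissible semistable pair with the prescribed Hilbert polynomial $rp(n)$, so that it defines an element of ${\mathfrak f}(T)$.

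I expect this step to be the main obstacle. In \cite{Tim4} reducedness of the base is used essentially at several points---to recognize a coherent sheaf from its restriction to a dense open subscheme, to guarantee torsion-freeness of the direct images that enter the construction, and to force several naturally occurring ideal sheaves to coincide---and each such use must be replaced by an argument that survives over an Artinian thickening, with only the irreducibility of $T_{\red}$ available. Concretely, the delicate points will be the $T$-flatness of the blow-up and of $\widetilde\E$ and the constancy of the Hilbert polynomial along $T$, now over a base carrying nilpotents.

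Once the assignment is in place, I would verify that it descends to twist-equivalence classes on both sides and commutes with an arbitrary base change $f\colon T'\to T$; the latter reduces to the compatibility of $\FFitt^0$, of blow-ups, and of the relevant direct images with $f^{*}$, which holds because the sheaves involved are $T$-flat at the stages where compatibility is invoked. This yields the natural transformation ${\mathfrak f}^{GM}\to{\mathfrak f}$. Finally, to obtain the morphism of schemes I would proceed as in Section~4: cover $\overline M$ by the quasi-projective Quot-scheme charts of the Gieseker--Maruyama construction, each carrying a local family of semistable sheaves, apply the transformation to each chart to obtain a morphism to $\widetilde M$, and glue these along overlaps---which is legitimate because the resolution of Section~2 is canonical. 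The resulting morphism $\kappa\colon\overline M\to\widetilde M$ is the one associated with the natural transformation, and restricting to reduced structures recovers the morphism $\kappa_{\red}$ of \cite{Tim4}.
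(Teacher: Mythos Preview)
Your proposal has two genuine gaps relative to the paper's argument.

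First, the ideal sheaf you blow up is wrong: you write $\FFitt^0(\E_T)$, but the paper blows up $\I=\FFitt^0\EExt^1(\E,\OO_{T\times S})$. For a torsion-free sheaf of positive rank on a surface, $\FFitt^0(\E)$ is the zero ideal and its blow-up is trivial; it is the Fitting ideal of the \emph{Ext sheaf} that detects the singularities of $\E$ and produces the admissible schemes of Definition~\ref{admsch}. Without this correction the construction does not even get off the ground.

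Second, and more seriously, you are missing the mechanism by which the resolved family lives over the original base $T$. The procedure of Section~2 blows up $T\times S$ in $\I$ to obtain $\widehat\Sigma$ and a locally free $\widehat\E$, but $\widehat\Sigma\to T$ is \emph{not} known to be flat; the paper then invokes Raynaud--Gruson flattening (Theorem~\ref{trg}) to produce a $T_0$-admissible blowup $g:\widetilde T\to T$ over which the pulled-back family $(\widetilde\Sigma,\widetilde\L,\widetilde\E)$ is flat. At this point one only has a family over $\widetilde T$, and since the centre of $g$ depends on the particular family $\E$ (as the paper explicitly remarks at the end of Section~2), this alone does \emph{not} define a natural transformation. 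The decisive step is Section~3: one shows that blow-up morphisms have ``infinitesimal sections'', combines this with a nonreduced-base flatness criterion (Proposition~\ref{critF}) to prove Proposition~3, and thereby \emph{descends} flatness along $g$, concluding that $(\widehat\Sigma\to T,\widehat\L,\widehat\E)$ was already flat over $T$. This descent argument is exactly what replaces the reducedness hypotheses you flag from \cite{Tim4}, and it is absent from your outline.

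A smaller point on Section~4: the paper does not cover $\overline M$ by charts and glue. It works with the single quasi-projective locus $Q\subset\Quot$, applies the resolution once to the universal quotient to get a $PGL(V)$-equivariant morphism $Q\to H_0$, and then passes to GIT quotients. Your gluing approach would require checking that the resolution is compatible with arbitrary base change, which is not obvious once Raynaud--Gruson is involved; the paper's route sidesteps this.
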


{\it Acknowledgement.} The author is cordially grateful to V.S.
Kulikov and V.V.Sho\-kurov (V.A. Steklov Institute, Moscow) for
lively interest to the work.

\section{Objects and functors}
Throughout  in this paper we identify locally free $\OO_X$-sheaf
on a scheme $X$ with corresponding vector bundle and use both
terms as synonyms.

We use the classical definition of semistability due to Gieseker
\cite{Gies}.
\begin{definition} \label{defgies} Coherent $\OO_S$-sheaf
$E$ is {\it stable} (resp., {\it semistable}) if for any proper
subsheaf  $F\subset E$ of rank  $r'=\rank F$ for  $n\gg 0$
$$
\frac{\chi(E\otimes L^n)}{r}>\frac{\chi(F\otimes L^n)}{r'},\;\;
({\mbox{\rm resp.,}} \;\; \frac{\chi(E\otimes L^n)}{r}\ge
\frac{\chi(F\otimes L^n)}{r'}\;).
$$
\end{definition}

Consider the Gieseker -- Maruyama functor
\begin{equation}\label{funcGM}{\mathfrak f}^{GM}:
(Schemes_k)^o \to Sets \end{equation} attaching to any scheme $T$
the set of equivalence classes of families of the form ${\mathfrak
F}_T^{GM}/\sim$ where
\begin{equation}\label{famGM} \mathfrak
F_T^{\,GM}= \left\{
\begin{array}{l} \E \mbox{\rm -- sheaf of } \OO_{T\times S}-
\mbox{\rm modules, flat over } T;\\
\L \mbox{\rm -- invertible sheaf of } \OO_{T\times S}-\mbox{\rm
modules,
 ample relative to } T\\
\mbox{\rm and such that } L|_{t\times S}\cong L\; \mbox{\rm for any } t\in T;\\
E_t:=\E|_{t\times S} \mbox{\rm -- torsion-free and semistable
 due to Gieseker}% relative to } L_t:=\L|_{t\times S}
;\\
\chi(E_t \otimes L_t^n)=rp(n).\end{array}\right\}
\end{equation}

The equivalence relation $\sim$ is defined as follows. Families
$(\E, \L)$ and $(\E',\L')$ from the class $\mathfrak F^{GM}_T$ are
said to be equivalent (notation: $ (\E, \L)\sim (\E',\L')$) if
there are line bundles  $L', L''$ on the scheme $T$ such that $\E'
= \E \otimes pr_1^{\ast} L',$
 $ \L' =  \L \otimes pr_1^{\ast} L''$ where $pr_1: T\times S \to
T$ is projection to the first factor.

\begin{remark} Since $\Pic (T \times S) =\Pic T \times \Pic
S$, our definition for moduli functor ${\mathfrak f}^{GM}$ is
equivalent to the standard definition as formulated, for example,
in \cite{HL}: the difference in the choice of polarizations $\L$
and $\L'$ with isomorphic restrictions on fibres over $T$, is
eliminated by tensoring by the inverse image of appropriate
invertible sheaf $L''$ from the base $T$.
\end{remark}

To proceed further, recall the definition of a sheaf of zeroth
Fitting ideals which is known from commutative algebra \cite[ch.
III, sect. 20.2]{Eisen}. Let $X$ be a scheme,  $F$ --
$\OO_X$-module with finite presentation $F_1
\stackrel{\varphi}{\longrightarrow} F_0 \to F$. Here
$\OO_X$-modules $F_0$ and $F_1$ are assumed to be locally free.
Without loss of generality we suppose that $\rank F_1 \ge \rank
F_0$.
\begin{definition} {\it The sheaf of zeroth Fitting ideals} for
$\OO_X$-module $F$ is defined as  $\FFitt^0 F =\im
(\bigwedge^{\rank F_0} F_1 \otimes \bigwedge^{\rank F_0}
F_0^{\vee} \stackrel{\varphi'}{\longrightarrow}\OO_X)$, where
$\varphi'$ is a morphism of $\OO_X$-modules induced by the
morphism $\varphi$.
\end{definition}

\begin{definition} \label{admsch} \cite{Tim3, Tim4} Polarized algebraic scheme
$(\widetilde S, \widetilde L)$ is {\it admissible} if the scheme
$(\widetilde S,\widetilde L)$ satisfies one of the conditions

 i)
$(\widetilde S, \widetilde L) \cong (S,L)$,

ii) $\widetilde S \cong {\rm Proj \,} \bigoplus_{s\ge
0}(I[t]+(t))^s/(t^{s+1})$, where $I={{\mathcal F}itt}^0 {{\mathcal
E}xt}^2(\varkappa, {\mathcal O}_S)$ for Artinian quotient sheaf
$q: \bigoplus^r {\mathcal O}_S\twoheadrightarrow \varkappa$ of
length  $l(\varkappa)\le c_2$. There is a morphism $\sigma:
\widetilde S \to S$ (which is called {\it canonical morphism}) and
$\widetilde L = L \otimes (\sigma ^{-1} I \cdot {\mathcal
O}_{\widetilde S})$
--- ample invertible sheaf on  $\widetilde S$; this polarization
$\widetilde L$ is called the {\it distinguished} polarization.
\end{definition}

\begin{remark} We require the sheaf $\widetilde L = L \otimes
(\sigma ^{-1} I \cdot {\mathcal O}_{\widetilde S})$ to be ample on
the scheme $\widetilde S$. If it is not true, replace
 $\OO_S$-sheaf $L$ by its big enough tensor power (which can be chosen
common for all $\widetilde S$, as shown in \cite[claim 1]{Tim6}).
We redenote this tensor power again as $L$.
\end{remark}

\begin{remark} The canonical morphism $\sigma: \widetilde S \to S$
is determined by the structure of $\OO_S$-algebra on
$\bigoplus_{s\ge 0}(I[t]+(t))^s/(t^{s+1}).$
\end{remark}

As shown in \cite[Introduction]{Tim6}, the scheme $\widetilde S$
consists of several components unless it is isomorphic to $S$. The
main component  $\widetilde S_0$ is the initial surface $S$ blown
up in the sheaf of ideals $I$. The ideal $I$ defines some
zero-dimensional subscheme in $S$ with structure sheaf $\OO_X/I.$
Main component $\widetilde S_0$ corresponds to an algebraic
variety which can be singular. Additional components $\widetilde
S_i$, $i>0$ can carry  nonreduced scheme structures. Number of
additional components, i.e. number of components of the scheme
$\overline{\widetilde S \setminus \widetilde S_0}$, equals to the
number of maximal ideals of the quotient algebra $\OO_S/I$.

\begin{definition} \label{sstpair}  \cite{Tim4,Tim6}
$S$-{\it stable} (resp.,{\it semistable}) {\it pair} $((\widetilde
S,\widetilde L), \widetilde E)$ is the following data:
\begin{itemize}
\item{$\widetilde S=\bigcup_{i\ge 0} \widetilde S_i$ --
admissible scheme, $\sigma: \widetilde S \to S$ --- its canonical
morphism, $\sigma_i: \widetilde S_i \to S$
--- restrictions of $\sigma$ to components $\widetilde S_i$,
$i\ge 0;$}
\item{$\widetilde E$ --- vector bundle on the scheme
$\widetilde S$;}
\item{$\widetilde L \in \Pic \widetilde S$ --- distinguished polarization;}
\end{itemize}
such that
\begin{itemize}
\item{$\chi (\widetilde E \otimes \widetilde
L^n)=rp(n),$ the polynomial $p(n)$ and the rank $r$ of the sheaf
$\widetilde E$ are fixed;}
\item{on the scheme $\widetilde S$ the sheaf
$\widetilde E$ is {\it stable} (resp., {\it semistable}) {\it due
to Gieseker} i.e. for any proper subsheaf $\widetilde F \subset
\widetilde E$ for $n\gg 0$
\begin{eqnarray*}
\frac{h^0(\widetilde F\otimes \widetilde L^n)}{{\rm rank\,} F}&<&
\frac{h^0(\widetilde E\otimes \widetilde L^n)}{{\rm rank\,} E},
\\ (\mbox{\rm resp.,} \;\;
\frac{h^0(\widetilde F\otimes \widetilde L^n)}{{\rm rank\,}
F}&\leq& \frac{h^0(\widetilde E\otimes \widetilde L^n)}{{\rm
rank\,} E}\;);
\end{eqnarray*}}
\item{on any of additional components $\widetilde S_i, i>0,$
the sheaf $\widetilde E_i:=\widetilde E|_{\widetilde S_i}$ is {\it
quasi-ideal,} i.e. it has a description
\begin{equation}\label{quasiideal}\widetilde E_i=
\sigma_i^{\ast}\ker q_0/\tors_i.\end{equation} for some $q_0\in
\bigsqcup_{l\le c_2} {\rm Quot\,}^l \bigoplus^r {\mathcal O}_S$.
The epimorphism $q_0: \bigoplus^r \OO_S \twoheadrightarrow
\varkappa$ is common for all components $\widetilde S_i$ of the
scheme $\widetilde S$ as well as $l=\length \varkappa.$
}\end{itemize}
\end{definition}

Subsheaf  $\tors_i$ is defined as a restriction
$\tors_i=\tors|_{\widetilde S_i}$ where the role of $\tors$ in our
considerations is analogous to the role of torsion subsheaf on
reduced scheme. Let $U$ be Zariski open subset in one of
components $\widetilde S_i, i\ge 0$ and $\sigma^{\ast}\ker
q_0|_{\widetilde S_i}(U)$ be the corresponding group of sections
carrying structure of ${\mathcal O}_{\widetilde S_i}(U)$-module.
Sections $s\in \sigma^{\ast}\ker q_0|_{\widetilde S_i}(U)$
annihilated by prime ideals of positive codimensions in
 ${\mathcal O}_{\widetilde S_i}(U)$, form a submodule in
$\sigma^{\ast}\ker q_0|_{\widetilde S_i}(U)$. We denote this
submodule as $\tors_i(U)$. The correspondence $U \mapsto
\tors_i(U)$ defines a subsheaf $\tors_i \subset \sigma^{\ast}\ker
q_0|_{\widetilde S_i}.$ Note that associated primes of positive
codimensions annihilating sections  $s\in \sigma^{\ast}\ker
q_0|_{\widetilde S_i}(U)$, correspond to subschemes supported in
the preimage  $\sigma^{-1}({\rm Supp\,}
\varkappa)=\bigcup_{i>0}\widetilde S_i.$ Since by the construction
the scheme $\widetilde S=\bigcup_{i\ge 0}\widetilde S_i$ is
connected, subsheaves  $\tors_i, i> 0,$ allow to form a subsheaf
$\tors \subset \sigma^{\ast}\ker q_0$. The former is defined as
follows. Section $s\in \sigma^{\ast}\ker q_0|_{\widetilde S_i}(U)$
satisfies  $s\in \tors|_{\widetilde S_i}(U)$ if and only if
\begin{itemize}
\item{there exists a section $y\in {\mathcal O}_{\widetilde S_i}(U)$ such that $ys=0$,}
\item{at least one of following conditions is satisfied: either $y\in {\mathfrak p}$ where
$\mathfrak p$ is prime ideal of positive codimension, or there
exist Zariski-open subset  $V\subset \widetilde S$ and a section
$s' \in \sigma^{\ast}\ker q_0 (V)$ such that $V\supset U$,
$s'|_U=s$, and $s'|_{V\cap \widetilde S_0} \in \tors
(\sigma^{\ast}\ker q_0|_{\widetilde S_0})(V\cap \widetilde S_0)$.
In the former expression the subsheaf of torsion
$\tors(\sigma^{\ast}\ker q_0|_{\widetilde S_0})$ is understood in
usual sense.}
\end{itemize}

As shown in \cite{Tim4}, there is a map taking any semistable
coherent torsion-free sheaf $E$ to the admissible semistable pair
$((\widetilde S, \widetilde L), \widetilde E)$ as follows. If $E$
is locally free then $((\widetilde S, \widetilde L), \widetilde
E)=((S,L),E)$. Otherwise $\widetilde S=\Proj \bigoplus_{s\ge
0}(I[t]+(t))^s/(t^{s+1})$, where $I={{\mathcal F}itt}^0 {{\mathcal
E}xt}^1(E, \OO_S)$, $\widetilde L = L \otimes (\sigma ^{-1} I
\cdot \OO_{\widetilde S})$ and $\widetilde E=\sigma^{\ast}
E/\tors$ where $\tors$ is understood as described. This map
corresponds to the morphism $\kappa_{\red}: \overline M_{\red} \to
\widetilde M_{\red}.$

Let $T,S$ be  schemes over a field $k$, $\pi: \widetilde \Sigma
\to T$ a morphism of $k$-schemes. We introduce the following

\begin{definition}\label{bitriv} The family of schemes
$\pi: \widetilde \Sigma \to T$ is {\it birationally $S$-trivial}
if there exist isomorphic open subschemes $\widetilde \Sigma_0
\subset \widetilde \Sigma$ and $\Sigma_0 \subset T\times S$ and
there is a scheme equality $\pi(\widetilde \Sigma_0)=T$.
%The family of schemes  $\pi:\widetilde \Sigma \to
%T$ is {\it birationally trivial with general fibre} $S$ if there
%exist isomorphic open subschemes  $\widetilde \Sigma_0 \subset
%\widetilde \Sigma$ and $\Sigma_0 \subset T\times S$ where
%$\widetilde \Sigma_0$ contains at least one fibre which is
%isomorphic to  $S$, and $\pi (\widetilde \Sigma_0)=T$.
\end{definition}
The former equality means that all fibres of the morphism $\pi$
have nonempty intersections with the open subscheme $\widetilde
\Sigma_0$.

In particular, if $T=\Spec k$ then $\pi$ is a constant morphism
and $\widetilde \Sigma_0 \cong \Sigma_0$ is open subscheme in $S$.

Since in the present paper we consider only $S$-birationally
trivial families, they will be referred to as {\it birationally
trivial} families.

Also we consider families of semistable pairs
%\begin{equation}\label{class}{\mathfrak F}_T= \left\{
%\begin{array}{l}\pi: \widetilde \Sigma \to T,
%\;\widetilde \L\in \Pic \widetilde \Sigma \mbox{\rm \;\;flat over
%}T,\\
%\mbox{\rm for }m\gg 0 \; \widetilde \L^m \; \mbox{\rm is very
%ample relative to  }T,\\ \forall t\in T \;\widetilde
%L_t=\widetilde \L|_{\pi^{-1}(t)}
%\mbox{\rm \; is ample;}\\
%(\pi^{-1}(t),\widetilde L_t) \mbox{\rm \;admissible scheme with
%destinguished polarization}; \\
%\chi (\widetilde L_t^n) \mbox{\rm \; does not depend on  }t,\\
% \widetilde \E - \mbox{\rm locally free } \OO_{\Sigma}\mbox{\rm
%-sheaf, flat over  } T;\\
% \chi(\widetilde \E\otimes\widetilde \L^{n})|_{\pi^{-1}(t)})=
% rp(n);\\
% ((\pi^{-1}(t), \widetilde L_t), \widetilde \E|_{\pi^{-1}(t)}) -
% \mbox{\rm semistable pair}
% \end{array} \right\} \end{equation}
%
\begin{equation}\label{class}{\mathfrak F}_T= \left\{
\begin{array}{l}\pi: \widetilde \Sigma \to T \mbox{\rm \;\;birationally $S$-trivial
},\\
\widetilde \L\in \Pic \widetilde \Sigma \mbox{\rm \;\;flat over
}T,\\
\mbox{\rm for }m\gg 0 \; \widetilde \L^m \; \mbox{\rm very ample
relatively }T,\\ \forall t\in T \;\widetilde L_t=\widetilde
\L|_{\pi^{-1}(t)}
\mbox{\rm \; ample;}\\
(\pi^{-1}(t),\widetilde L_t) \mbox{\rm \;admissible scheme with
distinguished polarization}; \\
\chi (\widetilde L_t^n) \mbox{\rm \; does not depend on }t,\\
 \widetilde \E \;\; \mbox{\rm locally free } \OO_{\Sigma}-\mbox{\rm
sheaf flat over } T;\\
 \chi(\widetilde \E\otimes\widetilde \L^{n})|_{\pi^{-1}(t)})=
 rp(n);\\
 ((\pi^{-1}(t), \widetilde L_t), \widetilde \E|_{\pi^{-1}(t)}) -
 \mbox{\rm semistable pair}
 \end{array} \right\} \end{equation} and a functor
\begin{equation}\label{funcmy}
\mathfrak f: (Schemes_k)^o \to (Sets)\end{equation} from the
category of $k$-schemes to the category of sets. It attaches to a
scheme $T$ the set of equivalence classes of families of the form
${\mathfrak F}_T/\sim$.

The equivalence relation $\sim$ is defined as follows. Families
$((\pi: \widetilde \Sigma \to T, \widetilde \L), \widetilde \E)$
and $((\pi': \widetilde \Sigma \to T, \widetilde \L'), \widetilde
\E')$ from the class $\mathfrak F_T$ are said to be equivalent
(notation: $((\pi: \widetilde \Sigma \to T, \widetilde \L),
\widetilde \E) \sim ((\pi': \widetilde \Sigma \to T,
\widetilde \L'), \widetilde \E')$) if\\
1) there exists an isomorphism  $ \iota: \widetilde \Sigma
\stackrel{\sim}{\longrightarrow} \widetilde \Sigma'$ such that the
diagram \begin{equation*} \xymatrix{\widetilde \Sigma
\ar[rd]_{\pi}\ar[rr]_{\sim}^{\iota}
&&\widetilde \Sigma ' \ar[ld]^{\pi'}\\
&T }
\end{equation*} commutes.\\
2) There exist line bundles $L', L''$ on $T$ such that
$\iota^{\ast}\widetilde \E' = \widetilde \E \otimes \pi^{\ast}
L',$ $\iota^{\ast}\widetilde \L' = \widetilde \L \otimes
\pi^{\ast} L''.$

\begin{remark} The definition of the functor of semistable admissible pairs
given here differs from definition from preceding papers
\cite{Tim4} -- \cite{Tim7}: we added new requirement of birational
triviality. This requirement was missed by the author before.
Indeed, without this require\-ment, the consideration of
"twisted"\, families of schemes is allowed. One can take even
twisted families with all fibres isomorphic to the surface $S$.
This makes sets ${\mathfrak F}_T/\sim$ {\it too big} in such a
sense that non-isomorphic families of schemes over a same base $T$
arise where corresponding fibres are isomorphic. For a simple
example take a projective plane  $S=\P^2$. Not any  $\P^2$-bundle
over the base $T$ is isomorphic to the product  $ T\times \P^2$.
This circumstance does not allow to conclude the isomorphism of
open subfunctor of Gieseker-semistable vector bundles, to an open
subfunctor corresponding to semistable $S$-pairs if the former is
considered without requirement of birational triviality. All the
results of articles \cite{Tim4} -- \cite{Tim7}  become true
together with their proofs when the requirement of birational
triviality is added.
\end{remark}

Now discuss what is the "size"\, of the maximal under  inclusion
of those open sub\-schemes $\widetilde \Sigma_0$ in a family of
admissible schemes $\widetilde \Sigma$, which are isomorphic to
appropriate open subschemes in $T\times S$ in the definition
\ref{bitriv}. The set $F=\widetilde \Sigma \setminus \widetilde
\Sigma_0$ is closed. If $T_0$ is open subscheme in $T$ whose
points carry fibres isomorphic to $S$, then $\widetilde \Sigma_0
\supsetneqq \pi^{-1}T_0$ (inequality is true because $\pi
(\widetilde \Sigma_0)=T$ in the definition \ref{bitriv}). The
subscheme $\Sigma_0$ which is open in $T\times S$ and isomorphic
to $\widetilde \Sigma_0$, is such that $\Sigma_0 \supsetneqq
T_0\times S$. If $\pi:\widetilde \Sigma \to T$ is family of
admissible schemes then $\widetilde \Sigma_0 \cong \widetilde
\Sigma \setminus F$, and $F$ is (set-theoretically) the union of
additional components of fibres which are non-isomorphic to $S$.

Following \cite[ch. 2, sect. 2.2]{HL} we recall some definitions.
Let  $\CC$ be a category, $\CC^o$ its dual category,
 $\CC'={\FF}unct(\CC^o, Sets)$ a category of functors to
 the category of sets. By Yoneda's lemma, the functor $\CC \to \CC':
F\mapsto (\underline F: X\mapsto \Hom_{\CC}(X, F))$ includes
 $\CC$ as full subcategory in $\CC'$.

\begin{definition}\label{corep} \cite[ch. 2, definition 2.2.1]{HL}
The functor  ${\mathfrak f} \in {\OO}b\, \CC'$ is {\it
corepresented by the object} $F \in {\OO}b \,\CC$ if there exists
a  $\CC'$-morphism  $\psi : {\mathfrak f} \to \underline F$ such
that any morphism  $\psi': {\mathfrak f} \to \underline F'$
factors through the unique morphism $\omega: \underline F \to
\underline F'$.
\end{definition}
 \begin{definition} The scheme $ M$ is a {\it coarse moduli space
 } for the functor $\mathfrak f$ if  $\mathfrak f$ is corepresented by
 $M.$
 \end{definition}

Now let we are given two functors ${\mathfrak f},
\widetilde{\mathfrak f}: \CC^o \to Sets$, and a natural
transformation $\underline \kappa: {\mathfrak f} \to
\widetilde{\mathfrak f}$ where for any  $T\in {\OO}b \, \CC$ there
is a commutative diagram
\begin{equation*}
\xymatrix{T \ar[d]_{=} \ar@{|->}[r]^{\mathfrak
f}&{\mathfrak f}(T) \ar[d]^{\underline \kappa(T)}\\
T \ar@{|->}[r]^{\widetilde{\mathfrak f}}&\widetilde{\mathfrak
f}(T)}
\end{equation*}
Let the functor ${\mathfrak f}$ have coarse moduli space $M$ and
the functor  $\widetilde{\mathfrak f}$ have coarse moduli space
$\widetilde M$. In the situation if definition \ref{corep} the
following diagram commutes:
\begin{equation*}\xymatrix{T \ar@{|->}[d]\ar[rr]^{=}&&T \ar@{|->}[d]\\
{\mathfrak f}(T) \ar[dd]_{\psi'(T)} \ar[dr]^{\psi(T)}
\ar[rr]^{\underline\kappa(T)}&& \widetilde{\mathfrak
f}(T) \ar[dd]_>>>>>>>{\widetilde \psi'(T)} \ar[dr]^{\widetilde \psi(T)}\\
&\ar[ld]^{\omega(T)}\Hom(T,M) \ar@{..>}[rr]^<<<<<<<<{\epsilon}&&
\ar[ld]^{\widetilde \omega(T)}\Hom(T, \widetilde M)\\
\Hom(T,F')\ar[rr]&&\Hom(T, F')}
\end{equation*}
The dash arrow  $\epsilon$ is defined since the functor
${\mathfrak f}$ is corepresented by the object $M$. Setting  $T=M$
we have  $\kappa:=\epsilon(\id): M\to \widetilde M$.

In this article  $\overline M$ is moduli space for  ${\mathfrak
f}^{GM}$,  and  $\widetilde M$ is moduli space for ${\mathfrak
f}$. We construct a natural transformation of functors $\mathfrak
f^{GM} \to \mathfrak f$, expressed by the diagram
\begin{equation*}\xymatrix{T \ar@{=}[d]\ar
@{|->}[r]^{\mathfrak f^{GM}}&{\mathfrak F}^{GM}_T/\sim \ar[d]\\
 T \ar@{|->}[r]^{\mathfrak f}&{\mathfrak
F}_{T}/\sim}
\end{equation*}
This natural transformation yields  a morphism of moduli schemes
$\kappa: \overline M \to \widetilde M$.

\section{Standard resolution for families with nonreduced base}

In this section we develop the analogue of the procedure of
standard resolution from articles  \cite{Tim1} -- \cite{Tim4}.

Let $T$ be arbitrary  (possibly nonreduced) $k$-scheme of finite
type. We assume that its reduction  $T_{\red}$ is irreducible. If
$\E$ is a family of coherent torsion-free sheaves on the surface
$S$ having nonreduced base $T$ then homological dimension of  $\E$
as $\OO_{T\times S}$-module is not greater then 1. The proof of
this fact for reduced equidimensional base can be found, for
example, in \cite[proposition 1]{Tim}.

Now we need the following simple lemma concerning homological
dimension of the family $\E$ with nonreduced base.
\begin{lemma}
Let coherent $\OO_{T\times S}$-module $\E$ of finite type is
$T$-flat  and its reduction $\E_{\red}:=\E \otimes_{\OO_{T}}
\OO_{T_{\red}}$ has homological dimension not greater then 1:
$\hd_{T_{\red}\times S}\E_{\red} \le 1.$ Then $\hd_{T\times S}
\E\le 1$.
\end{lemma}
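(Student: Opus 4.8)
The plan is to reduce the global statement about homological dimension to a local, pointwise one, and then to exploit $T$-flatness to lift a length-one resolution from the reduced fibre direction. Concretely, the question is local on $T\times S$, so I would work at a point $x\in T\times S$ lying over a point $t\in T$, and consider the local ring $A=\OO_{T\times S,x}$ together with its quotient $\bar A = \OO_{T_{\red}\times S,x}=A/I_{\red}$, where $I_{\red}$ is generated by the nilpotents pulled back from $T$. Write $M=\E_x$; by hypothesis $M$ is flat over $B=\OO_{T,t}$ (since $\E$ is $T$-flat), and $M\otimes_B B_{\red}=M/\mathfrak n M$ (where $\mathfrak n\subset B$ is the nilradical) has projective dimension $\le 1$ over $\bar A$. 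The goal is $\mathrm{pd}_A M\le 1$.

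First I would recall the standard homological criterion: since $S$ is smooth of dimension $2$ and $T$ is Noetherian, $A$ has finite global dimension locally in the $S$-directions, and $\mathrm{pd}_A M\le 1$ is equivalent to $\mathrm{Tor}^A_2(M,k(x))=0$ (or, by Auslander--Buchsbaum, to $\mathrm{depth}\,M\ge \dim A-1$ after a suitable equidimensionality bookkeeping; but the Tor-vanishing formulation is cleanest here). So it suffices to show $\mathrm{Tor}^A_2(M,N)=0$ for $N$ the residue field, and in fact for all $A$-modules $N$.

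The key step is a base-change/flatness argument. Because $M$ is $B$-flat, for any $\bar A$-module $\bar N$ one has a change-of-rings isomorphism $\mathrm{Tor}^A_i(M,\bar N)\cong \mathrm{Tor}^{\bar A}_i(M/\mathfrak n M,\bar N)$: this follows from flatness of $M$ over $B$ together with the fact that $\bar A = A\otimes_B B_{\red}$, by the spectral sequence (or direct dévissage) $\mathrm{Tor}^{\bar A}_p(M\otimes_B B_{\red},\mathrm{Tor}^B_q(B_{\red},\bar N))\Rightarrow \mathrm{Tor}^A_{p+q}(M,\bar N)$, noting that $\bar N$ is already a $B_{\red}$-module so $\mathrm{Tor}^B_q(B_{\red},\bar N)$ need not vanish — here I would instead use that $M$ flat over $B$ makes $M\otimes^{\mathbb L}_A \bar N \simeq (M\otimes_B B_{\red})\otimes^{\mathbb L}_{\bar A}\bar N$ directly. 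Hence $\mathrm{Tor}^A_2(M,\bar N)\cong\mathrm{Tor}^{\bar A}_2(M/\mathfrak n M,\bar N)=0$ by the hypothesis $\hd_{T_{\red}\times S}\E_{\red}\le 1$. Taking $\bar N=k(x)$, which is annihilated by $\mathfrak n$ hence is an $\bar A$-module, gives $\mathrm{Tor}^A_2(M,k(x))=0$, and therefore $\mathrm{pd}_A M\le 1$. Running this over all closed points $x$ yields $\hd_{T\times S}\E\le 1$.

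I expect the main obstacle to be the rigorous justification of the change-of-rings comparison: one must be careful that the flatness used is flatness of $\E$ over $T$ (not over $T\times S$), and that this is exactly what lets the derived tensor product with $\OO_{T_{\red}\times S}$ compute the reduced module $\E_{\red}$ without higher Tor contributions in the $T$-direction. A clean way is: $\OO_{T_{\red}\times S}$ has a finite locally free (indeed Koszul-type) resolution over $\OO_{T\times S}$ only after restricting to where $\mathfrak n$ is nicely generated, so instead I would phrase everything as: $\E$ is $T$-flat $\Rightarrow$ the natural map $\E\otimes^{\mathbb L}_{\OO_{T\times S}}\OO_{T_{\red}\times S}\to \E_{\red}$ is a quasi-isomorphism, because base change along $T_{\red}\hookrightarrow T$ of a $T$-flat sheaf has no higher Tor. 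Granting that, $\E_{\red}\otimes^{\mathbb L}_{\OO_{T_{\red}\times S}}\mathcal G \simeq \E\otimes^{\mathbb L}_{\OO_{T\times S}}\mathcal G$ for any $\OO_{T_{\red}\times S}$-module $\mathcal G$, and the vanishing of $\mathcal{T}\!or_2$ transfers. The surface $S$ being smooth (so that the relevant local rings are regular in the $S$-directions and total homological dimensions are finite) guarantees that bounded $\mathrm{Tor}$-vanishing in degree $2$ genuinely forces $\mathrm{pd}\le 1$, closing the argument.
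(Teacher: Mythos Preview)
Your argument is correct and takes a genuinely different route from the paper. The paper works with an explicit two-term resolution: starting from $0\to E_1\to E_0\to\E\to 0$ with $E_0$ locally free, it uses $T$-flatness of $\E$ to keep the sequence exact after applying $-\otimes_{\OO_T}\OO_{T_{\red}}$, so that $E_{1,\red}$ is locally free by the hypothesis on $\E_{\red}$; then it invokes Grothendieck's fibrewise flatness criterion (SGA1, IV, cor.~5.9) to conclude that $E_1$ itself is locally free over $T\times S$ (it is $T$-flat as a kernel between $T$-flat sheaves, and its restriction to each closed fibre $t\times S$ agrees with that of $E_{1,\red}$, hence is locally free there). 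Your approach instead localizes, uses the criterion $\Tor^A_2(M,k(x))=0\Rightarrow\mathrm{pd}_A M\le 1$, and transfers the vanishing from $\bar A$ to $A$ via the change-of-rings identity $\Tor^A_i(M,\bar N)\cong\Tor^{\bar A}_i(\bar M,\bar N)$, valid because both $A$ and $M$ are $B$-flat (so a free $A$-resolution of $M$ base-changes to a free $\bar A$-resolution of $\bar M$). Both proofs rest on the same $T$-flatness input but package it differently: the paper through a concrete resolution plus a geometric fibrewise criterion, you through derived base change and the local Tor test. One clarification: your closing worry about needing $S$ smooth is unnecessary---the implication $\Tor^A_2(M,k)=0\Rightarrow\mathrm{pd}_A M\le 1$ holds over any Noetherian local ring by minimal free resolutions, and the flatness of $A=\OO_{T\times S,x}$ over $B=\OO_{T,t}$ is automatic since $T\times S\to T$ is a product projection.
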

\begin{proof}
Assume the opposite; let in the exact triple of $\OO_{T\times
S}$-modules
$$
0\to E_1 \to E_0 \to \E \to 0
$$
 $E_1$ be nonlocally free as $\OO_{T\times S}$-module while $E_0$
 is locally free. Passing to reductions (this means tensoring by
$\otimes_{\OO_T} \OO_{T_{\red}}$) we come to the exact triple
$$0\to E_{1\red} \to E_{0\red} \to \E_{\red}\to 0.
$$
Here we took into account that since $\E$ is $T$-flat, then
$\TTor_1^{\OO_T}(\E, \OO_{T_{\red}})=0$, and left-exactness is
preserved. Since $\hd_{T_{\red}\times S} \E_{\red}\le 1,$ then
$E_{1\red}$ is locally free as $\OO_{T_{\red}\times S}$-module.
Also since $\E$ and $E_0$ are $T$-flat and of finite type then
$E_1$ is also $T$-flat and of finite type. It rests to conclude
that $E_1$ is locally free.   Apply the sheaf-theoretic version of
the following result from  Grothendieck's SGA \cite[ch. IV,
corollaire 5.9]{SGAI}:
\begin{proposition} If $X\stackrel{g}{\to} Y\stackrel{f}
{\to} T$ are morphisms of Noetherian schemes and $f$ is flat
morphism then a coherent $\OO_X$-sheaf $\EE$ is flat over $Y$ if
and only if it is flat over $T$ and for any closed point $t\in T$
$\EE|_{(f\circ g)^{-1}(t)}$ is flat over $\OO_{f^{-1}(t)}$.
\end{proposition}

Set $X=Y=T\times S$, $g: X\to Y$ to be an identity isomorphism,
$\EE:=E_1$, $f=p: T\times S \to T$  a projection,
$p^{-1}(t)=t\times S$. Then for any closed point $t\in T$ (keeping
in mind that sets of closed points are equal and residue fields of
corresponding closed points are isomorphic for a scheme $T$ and
for its reduction $T_{\red}$) $E_1|_{t\times
S}=E_{1\red}|_{t\times S}$, and $E_{1\red}|_{t\times S}$ is flat
over $\OO_{t\times S}$ because of local freeness. From this we
conclude that $E_1$ is locally free as $\OO_{T\times S}$-module.
This completes the proof of the lemma.
\end{proof}

We do computations as in \cite{Tim1}. Choose and fix locally free
$\OO_{T\times S}$-resolution of the sheaf $\E$:
\begin{equation}\label{lfr}
0\to E_1 \to E_0 \to \E \to 0.
\end{equation}
Form a sheaf of 0-th Fitting ideals
\begin{equation}\label{fitt}\I=\FFitt^0 \EExt^1(\E, \OO_{T\times
S}).\end{equation} Let  $\sigma \!\!\! \sigma: \widehat{\Sigma}\to
\Sigma$ be a morphism of blowing up the scheme $\Sigma:=T\times S$
in the sheaf of ideals  $\I$. Apply inverse image $\sigma \!\!\!
\sigma^{\ast}$ to the dual sequence of (\ref{lfr}):
\begin{eqnarray}\xymatrix{
\sigma \!\!\! \sigma^{\ast}\E^{\vee}\ar[r]&\sigma \!\!\!
\sigma^{\ast}E_0^{\vee}\ar[r]
&\sigma \!\!\! \sigma^{\ast}\WW \ar[r]&0, } \label{cc} \nonumber\\
\label{c}\xymatrix{ \sigma \!\!\! \sigma^{\ast}\WW\ar[r]&\sigma
\!\!\! \sigma^{\ast}E_1^{\vee}\ar[r] &\sigma \!\!\!
\sigma^{\ast}\EExt^1 (\E, \OO_{\Sigma})\ar[r]&0.}
\end{eqnarray}
The symbol  $\WW$ stands for the sheaf  $$\ker(E_1^{\vee} \to
\EExt^1(\E, \OO_{\Sigma}))=\coker(\E^{\vee} \to E_0^{\vee}).$$

In  (\ref{c}) denote $\NN:=\ker(\sigma \!\!\!
\sigma^{\ast}E_1^{\vee}\to \sigma \!\!\! \sigma^{\ast}\EExt^1 (\E,
\OO_{\Sigma}))$. The sheaf   $\FFitt^0(\sigma \!\!\!
\sigma^{\ast}\EExt^1(\E, \OO_{\Sigma}))$ is invertible by
functorial property of  $\FFitt$:
\begin{eqnarray*}\FFitt ^0(\sigma \!\!\! \sigma
^{\ast}\EExt^1_{\OO_{\Sigma}}(\E,\OO_{\Sigma}))= (\sigma \!\!\!
\sigma^{-1}\FFitt
^0(\EExt^1_{\OO_{\Sigma}}(\E,\OO_{\Sigma})))\cdot
\OO_{\widehat{\Sigma}}= (\sigma \!\!\! \sigma^{-1}\I)\cdot
\OO_{\widehat{\Sigma}}.
\end{eqnarray*}
Although 0th Fitting ideal sheaf is invertible, non-reducedness of
the scheme $\widehat \Sigma$ makes \linebreak Tikhomirov's lemma
\cite[lemma 1]{Tikh} nonapplicable in its initial formulation.
However it can be slightly generalized as follows.

\begin{lemma} Let  $X$ be Noetherian scheme such that its reduction
$X_{\red}$ is irreducible, $\FF$ nonzero coherent $\OO_X$-sheaf
supported on a subscheme of codimension $\ge 1$. Then the sheaf of
0-th Fitting ideals $\FFitt^0(\FF)$ is invertible $\OO_X$-sheaf if
and only if $\FF$ has homological dimension equal to 1: $\hd_X
\FF=1.$
\end{lemma}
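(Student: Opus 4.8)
The plan is to reduce the statement to the classical Tikhomirov lemma \cite[lemma 1]{Tikh}, which handles the case of an integral scheme, by working on the reduction $X_{\red}$ and then transferring the conclusion back to $X$. The key observation is that local freeness, Fitting ideals and homological dimension are essentially detected on $X_{\red}$ under the standing hypotheses, because associated primes of positive codimension and the local structure of a coherent sheaf are insensitive to nilpotents in the structure sheaf.

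First I would prove the implication $\hd_X \FF = 1 \Rightarrow \FFitt^0(\FF)$ invertible. Working locally, choose a finite presentation $0 \to E_1 \to E_0 \to \FF \to 0$ with $E_1, E_0$ locally free (possible precisely because $\hd_X \FF \le 1$); note $\rank E_1 = \rank E_0 =: m$ since $\FF$ is supported in codimension $\ge 1$ and $X_{\red}$ is irreducible, so $\FF$ is generically zero. Then $\FFitt^0(\FF)$ is the image of $\det\varphi \colon \bigwedge^m E_1 \otimes \bigwedge^m E_0^{\vee} \to \OO_X$, i.e. it is locally principal, generated by one element $d = \det\varphi$. To see that this element is a non-zero-divisor, hence that $\FFitt^0(\FF)$ is an invertible ideal sheaf, I pass to $X_{\red}$: there $\varphi \otimes \OO_{X_{\red}}$ is injective (because $\TTor_1$ vanishes is not automatic, but generic rank considerations plus irreducibility of $X_{\red}$ give that $\varphi_{\red}$ is generically an isomorphism, so $\det \varphi_{\red} \ne 0$ in the domain $\OO_{X_{\red}}$), whence $d \notin \mathrm{nil}(\OO_X)$. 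Combined with the fact that $d$ generates a locally principal ideal and $X$ is Noetherian, a standard argument (any locally principal ideal generated by an element that is a non-zero-divisor is invertible) finishes this direction; the slightly delicate point is that $d$ is a non-zero-divisor in $\OO_X$, not merely nonzero, which I would extract from $T$-flatness–type arguments in the style of the preceding lemma of the excerpt, or from the primary decomposition of $\OO_X$ together with the support hypothesis on $\FF$.

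For the converse, assume $\FFitt^0(\FF)$ is invertible. I would again go to $X_{\red}$: the base-change formula $\FFitt^0(\FF)\cdot \OO_{X_{\red}} = \FFitt^0(\FF \otimes \OO_{X_{\red}})$ shows $\FFitt^0(\FF_{\red})$ is invertible on the irreducible reduced Noetherian scheme $X_{\red}$, so the original Tikhomirov lemma applies and yields $\hd_{X_{\red}} \FF_{\red} = 1$. Now I transfer this upward exactly as in the lemma already proved in the excerpt: take a presentation $0 \to E_1 \to E_0 \to \FF \to 0$ on $X$ with $E_0$ locally free; reducing gives $0 \to E_{1,\red} \to E_{0,\red} \to \FF_{\red} \to 0$ (left exactness is fine here since we only need the surjection and the kernel), and $\hd_{X_{\red}} \FF_{\red} \le 1$ forces $E_{1,\red}$ locally free over $X_{\red}$. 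Since $E_1$ and $E_0$ need not be $T$-flat in the abstract setting of this lemma, I cannot invoke the Grothendieck criterion verbatim; instead the hypothesis to use is that $X_{\red}$ and $X$ have the same points with isomorphic residue fields, and a coherent sheaf on a Noetherian scheme is locally free iff its fibre dimension is locally constant, which is checked pointwise — $E_1|_x = E_{1,\red}|_x$ is locally free of constant rank. Hence $E_1$ is locally free on $X$ and $\hd_X \FF \le 1$; it cannot be $0$ since then $\FF$ would be locally free, contradicting that it is supported in codimension $\ge 1$ on a scheme whose reduction is irreducible. Therefore $\hd_X \FF = 1$.

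The main obstacle I anticipate is the non-zero-divisor issue in the first direction: over a nonreduced base one must rule out that the local generator $d$ of $\FFitt^0(\FF)$ is a zero-divisor killed by an embedded (nilpotent) component of $X$, and one must argue carefully that "locally principal ideal with non-zero-divisor generator" is genuinely invertible rather than merely cyclic. I expect this to require the irreducibility of $X_{\red}$ together with the codimension hypothesis on $\Supp \FF$ in an essential way, mirroring how the preceding lemma of the paper used flatness to descend and ascend local freeness across the reduction map.
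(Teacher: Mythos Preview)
Your reduction-to-$X_{\red}$ strategy has a genuine gap in the direction $\FFitt^0(\FF)$ invertible $\Rightarrow \hd_X \FF = 1$. The step where you conclude that $E_1$ is locally free on $X$ because its fibre dimension is locally constant is false on a nonreduced scheme: the criterion ``locally constant fibre dimension $\Rightarrow$ locally free'' requires the base to be reduced. The minimal counterexample is $X = \Spec k[\epsilon]/(\epsilon^2)$ with the module $k = k[\epsilon]/(\epsilon)$; its fibre dimension is constantly $1$ but it is not free. There is also a preceding issue you wave away: tensoring $0 \to E_1 \to E_0 \to \FF \to 0$ by $\OO_{X_{\red}}$ need not stay left exact (the obstruction is $\TTor_1^{\OO_X}(\FF, \OO_{X_{\red}})$, which has no reason to vanish since $\FF$ is not assumed flat over anything here), so $E_{1,\red}$ is not even identified with the kernel whose local freeness Tikhomirov's lemma provides on $X_{\red}$. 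In the earlier lemma of the paper this lifting worked precisely because $\E$ was $T$-flat; for an arbitrary $\FF$ on an arbitrary nonreduced $X$ you have no such leverage.

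The paper does not attempt this descent at all. It simply reruns Tikhomirov's original Cramer-rule computation over the local ring $\OO_{X,x}$, noting that the only place integrality entered was to know that the generator $a$ of the (invertible, hence locally principal) Fitting ideal is a non-zero-divisor. Irreducibility of $X_{\red}$ means every zero-divisor of $\OO_{X,x}$ is nilpotent, while invertibility of $(a)$ forces $a$ to be a non-zero-divisor; that single observation is the entire modification, and the rest of the minor-cancellation argument goes through verbatim. So the nonreduced case is handled by the same elementary linear algebra, not by reduction to the reduced case. (Incidentally, your anxiety in the easy direction is misplaced: once $\varphi\colon E_1 \hookrightarrow E_0$ is injective between locally free sheaves of the same rank, $\det\varphi$ is a non-zero-divisor by the standard McCoy/determinantal criterion, with no need to pass to $X_{\red}$.)
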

\begin{proof} is almost identical to the proof in
\cite[Lemma 1]{Tikh}, except some details. The part  "if"\, is
obvious. For opposite implication it is necessary to prove that
$\hd_{\OO_{X,x}}\FF_x=1$ for any point $x\in X$. Irreducibility of
reduction  $X_{\red}$ means that the local ring $\OO_{X,x}$
contains no zero-divisors except nilpotents.

Consider finite presentation of $\OO_{X,x}$-module $\FF_x$:
\begin{equation} \label{pres}
M \stackrel{f}{\to} N \to \FF_x \to 0,
\end{equation}
where  $M$ and $N$ are free  $\OO_{X,x}$-modules of the form
$M=\bigoplus_{i=1}^{n+r} \OO_{X,x} e_i$ and \linebreak
$N=\bigoplus_{j=1}^{n} \OO_{X,x} e'_j$ respectively. Let
$A=(a_{ij})$ be a matrix of $\OO_{X,x}$-linear map $f$ with
respect to systems of generating elements $(e_i)_{i=1}^{n+r}$,
$(e'_j)_{j=1}^n$ where $a_{ij}\in \OO_{X,x}$. Since  $Fitt^0
\FF_x$ is principal ideal in $\OO_{X,x}$ generated by all $n\times
n$-minors of the matrix $A$, we can put (possibly after
re-ordering of generating elements) that $Fitt^0 \FF_x=(a)$,
$$
a=\det \left[\begin{array}{ccc} a_{11}& \dots &
a_{1n}\\
\vdots&\ddots&\vdots\\
a_{n1}& \dots & a_{nn}\\\end{array} \right] \in \OO_{X,x}.$$ The
element  $a$ is non-zerodivizor in  $\OO_{X,x}$, and by our
restriction on the form of the ring $\OO_{X,x}$ this element is
not nilpotent.

Assume that  $r\ge 1$ (the case  $r=0$ will be considered
separately). For $1 \le i \le n$, $n+1 \le k \le n+r$ denote by
$\Delta_{ik}$ $n\times n$-minor of the matrix $A$. This minor is
obtained by replacing of  $i$-th column by $k$-th column of the
matrix  $A$. For  $n+1 \le k \le n+r$ consider  $r$ similar
systems each of $n$ linear equations (one system for each $k$) in
$n$ variables:
$$\sum_{j=1}^n a_{ij} x_{jk}=a_{ik}, \quad 1\le i\le n,
\quad n+1\le k\le n+r.$$ By Cramer's rule following relations are
true \begin{equation} \label{Cramer}\sum_{j=1}^n a_{ij}
\Delta_{jk}=a_{ik} a, \quad 1\le i\le n, \quad n+1\le k\le
n+r.\end{equation} Since ideal  $Fitt^0 \FF_x$ is 1-generated by
the element  $a$, there exist such  $\lambda_{ik} \in \OO_{X,x},$
that $\Delta_{ik}=\lambda_{ik} a$. Their substitution  in
(\ref{Cramer}) yields \begin{equation*} (\sum_{j=1}^n a_{ij}
\lambda_{jk}) a=a_{ik} a, \quad 1\le i\le n, \quad n+1\le k\le
n+r.\end{equation*} Since  $a$ is non-zerodivisor, then
$(\sum_{j=1}^n a_{ij} \lambda_{jk}) =a_{ik} , \quad 1\le i\le n,
\quad n+1\le k\le n+r.$ By the definition of the matrix $A$ this
means that  $f(e_k) \in f(\bigoplus_{i=1}^n \OO_{X,x} e_i)$ for
$n+1 \le k \le n+r.$ Hence  $f(M)=f(\bigoplus_{i=1}^n \OO_{X,x}
e_i)$. Then replacing in  (\ref{pres}) $M$ by $\bigoplus_{i=1}^n
\OO_{X,x}e_i$ we have $r=0$.

It rests to prove injectivity of the homomorphism $f$ for $r=0.$
Consider the system of linear equations $$ \sum_{i=1}^n
a_{li}x_i=0,
$$
defining $\ker f$. Denoting by $X$ the column of indeterminates
$x_1,\dots, x_n$ we obtain a matrix equation. Let  $A^{\ast}$ be
an adjoint matrix for  $A$. Elements $a^{\ast}_{kl}$ of $A^{\ast}$
equal to algebraic complements of elements of $A$:
$a^{\ast}_{kl}=A_{lk}$. Then  $A^{\ast} A=E\det A $ and hence
$A^{\ast}A X=(\det A) X=aX=0$. Since  $a$ is non-zerodivisor, then
$X=0$. This completes the proof of the lemma.
 \end{proof}

Applying the lemma we conclude that    $\hd \sigma \!\!\! \sigma
^{\ast}\EExt^1_{\OO_{\Sigma}}(\E,\OO_{\Sigma})=1.$

Hence the sheaf  $\NN=\ker(\sigma \!\!\!
\sigma^{\ast}E_1^{\vee}\to \sigma \!\!\! \sigma
^{\ast}\EExt^1_{\OO_{\Sigma}}(\E,\OO_{\Sigma}) $ is locally free.
Then there is a morphism of locally free sheaves $\sigma\!\!\!
\sigma^{\ast}E_0^{\vee} \to \NN.$ Let  $Q$ be a sheaf of
$\OO_{\Sigma}$-modules which factors the morphism
 $E_0^{\vee} \to
E_1^{\vee}$ into the composite of epimorphism and monomorphism. By
the definition of the sheaf $\NN$ it also factors the morphism
$\sigma\!\!\!\sigma^{\ast}Q \to
\sigma\!\!\!\sigma^{\ast}E_1^{\vee}$ in the composite of
epimorphism and monomorphism and
$\sigma\!\!\!\sigma^{\ast}E_0^{\vee}\to
\sigma\!\!\!\sigma^{\ast}Q$ is an epimorphism. From this we
conclude that the composite
$\sigma\!\!\!\sigma^{\ast}E_0^{\vee}\to
\sigma\!\!\!\sigma^{\ast}Q\to \NN$ is an epimorphism of locally
free sheaves. Then its kernel is also locally free sheaf. Now set
$\widehat \E:=\ker(\sigma\!\!\! \sigma^{\ast}E_0^{\vee} \to
\NN)^{\vee}$. Consequently we have an exact triple of locally free
$\OO_{\widehat \Sigma}$-modules $$ 0\to \widehat \E^{\vee} \to
\sigma\!\!\!\sigma^{\ast}E_0^{\vee} \to \NN \to 0.
$$ Its dual is also exact.

Now there is a commutative diagram with exact rows
\begin{equation}\label{depi}\xymatrix{0\ar[r]& \NN^{\vee}\ar[r]&\sigma\!\!\!\sigma^{\ast}E_0 \ar[r]&\widehat \E \ar[r]&0\\
& \sigma\!\!\!\sigma^{\ast}E_1 \ar[u]
\ar[r]&\sigma\!\!\!\sigma^{\ast}E_0 \ar@{=}[u] \ar[r]&
\sigma\!\!\!\sigma^{\ast}\E \ar[u] \ar[r]&0}
\end{equation}
where the right vertical arrow is an epimorphism.

Recall the following definition from \cite[$O_{III}$, definition
9.1.1]{EGAIII}.
\begin{definition} The continuous mapping $f: X\to Y$ is called {\it
quasi-compact} if for any open quasi-compact subset $U\subset Y$
its preimage $f^{-1}(U)$ is quasi-compact. Subset  $Z$ is called
{\it retro-compact in } $X$ if the canonical injection
 $Z \hookrightarrow X$ is quasi-compact, and if for
 any open quasi-compact subset  $U \subset X$ the intersection $U
\cap Z$ is quasi-compact.
\end{definition}

Let  $f: X \to S$ be a scheme morphism of finite presentation,
$\MM$ be a quasi-coherent  $\OO_X$-module of finite type.

\begin{definition} \cite[part 1, definition 5.2.1]{RG} $\MM$
is $S$-{\it flat in dimension} $\ge n$ if there exist a
retro-compact open subset $V \subset X$ such that $\dim
(X\setminus V)/S < n$ and if  $\MM|_V$ is $S$-flat module of
finite presentation.
\end{definition}
If $\MM$ is $S$-flat  module of finite presentation and schemes
$X$ and $S$ are of finite type over the field, then any open
subset $V \subset X$ fits to be used in the definition. Setting
$V=X$ we have $X\setminus V=\emptyset$ and $\dim (X\setminus V)/S
=-1-\dim S.$ Consequently,
 $S$-flat  module of finite presentation is flat in dimension  $\ge -\dim S.$

Conversely, let  $\OO_X$-module $\MM$ be  $S$-flat in dimension
$\ge -\dim S$. Then there is an open retro-compact subset
$V\subset X$ such that  $\dim (X\setminus V)/S<-\dim S$ and such
that  $\MM|_V$ is  $S$-flat module. By the former inequality for
dimensions we have  $\dim (X\setminus V)<0$, what implies $X=V$,
and $\MM|_V=\MM$ is $S$-flat.

\begin{definition} \cite[part 1, definition 5.1.3]{RG} Let
 $f: S' \to S$ be a morphism of finite type, $U$ be an open
 subset in  $S$. The morphism $f$ is called
 $U$-{\it admissible blowup} if there exist a closed subscheme
 $Y \subset S$ of finite
 presentation  which is disjoint from $U$
 and such that  $f$ is isomorphic to the blowing up a scheme
$S$ in $Y$.
\end{definition}

\begin{theorem}\label{trg}\cite[theorem 5.2.2]{RG} Let $S$ be a
quasi-compact quasi-separated scheme, $U$ be open quasi-compact
subscheme in  $S$, $f: X\to S$ of finite presentation, $\MM$
$\OO_X$-module of finite type, $n$ an integer. Assume that
$\MM|_{f^{-1}(U)}$ is flat over $U$ in dimension $\ge n$. Then
there exist  $U$-admissible blowup $g: S' \to S$ such that
$g^{\ast} \MM$ is $S'$-flat in dimension $\ge n$.
\end{theorem}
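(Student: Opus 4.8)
The plan is to follow the Raynaud--Gruson strategy of flattening by blowing up, in three stages: reduction to a Noetherian base, a downward induction on the integer $n$, and a single-stratum flattening step governed by Fitting ideals. The first step is to reduce to $S$ Noetherian. Since $S$ is quasi-compact and quasi-separated and the data $f\colon X\to S$, $\MM$, and $U$ are all of finite presentation, one writes $S$ as a filtered inverse limit of Noetherian schemes $S_\lambda$ of finite type over $\Z$ and descends the whole datum to some $(X_\lambda,\MM_\lambda,U_\lambda)$ by the standard limit theorems of EGA~IV. Both the hypothesis that $\MM$ is flat over $U$ in dimension $\ge n$ and the conclusion that a $U$-admissible blowup achieves flatness in dimension $\ge n$ are preserved under such limits, since admissible blowups have finitely presented centres and flatness in a prescribed relative dimension is a finitely presented condition. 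It therefore suffices to treat $S$ Noetherian.

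With $S$ Noetherian, I would induct downward on $n$. For $n$ exceeding the maximal fibre dimension of $\Supp\MM\to S$ the module is automatically $S$-flat in dimension $\ge n$ (take $V=X$); this is the base case. For the inductive step, assuming the result for $n+1$, after one admissible blowup I may suppose $\MM$ is $S$-flat in dimension $\ge n+1$, and the task is to improve this to dimension $\ge n$. Because flatness already holds over $U$ and in every relative dimension $\ge n+1$, the remaining non-flat locus is concentrated over $S\setminus U$ in the stratum of relative dimension exactly $n$; this guarantees that the further blowup can be chosen $U$-admissible, with centre disjoint from $U$.

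The core step flattens along the relative-dimension-$n$ stratum. Working locally on the Noetherian base and invoking generic flatness over the finitely many components of $S_{\red}$, I may assume the non-flat locus is a proper closed subset of $S$. A relative Noether normalization, realized by a suitable projection, reduces the problem to a finitely presented module whose support is finite over an affine space $\A^n$ over the base; for such a module flatness is detected by local freeness, hence by its Fitting ideals. Blowing up the zeroth Fitting ideal renders it invertible, and the strict transform on $X\times_S S'$ of the pullback of $\MM$---which is what the symbol $g^\ast\MM$ in the statement denotes---then acquires the required flatness in relative dimension $n$. This is precisely the mechanism exhibited locally by the generalized Tikhomirov lemma proved above, where invertibility of $\FFitt^0$ is equivalent to homological dimension $1$ and hence to the sought flatness. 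Composing with the blowup from the inductive hypothesis completes the step.

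The hard part will be the bookkeeping in the core stage: one must ensure that the blowup improving flatness in relative dimension $n$ does not destroy the flatness already present in dimensions $\ge n+1$---this forces the passage to strict transforms rather than naive pullbacks and the choice of centre inside the dimension-$\le n$ non-flat locus---and one must amalgamate the successive admissible blowups into a single $U$-admissible one using the functoriality of blowing up products of finitely presented ideal sheaves. Making the relative Noether normalization uniform over the whole base, rather than only at a single point, is the other genuinely technical point.
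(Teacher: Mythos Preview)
The paper does not prove this theorem at all: it is quoted verbatim as \cite[theorem 5.2.2]{RG} and used as a black box to produce the $T_0$-admissible blowup $g\colon\widetilde T\to T$ in the standard resolution. There is therefore no ``paper's own proof'' to compare your proposal against.

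That said, your outline is a faithful sketch of the Raynaud--Gruson argument itself (limit reduction to a Noetherian base, downward induction on $n$, d\'evissage via relative Noether normalization to a module finite over $\A^n_S$, and flattening by blowing up a Fitting-type ideal). One caution: your appeal to ``the generalized Tikhomirov lemma proved above'' overstates the connection. That lemma characterizes when $\hd_X\FF=1$ via invertibility of $\FFitt^0\FF$ on a scheme whose reduction is irreducible; it is used in the paper to control the resolved sheaf $\widehat\E$ on $\widehat\Sigma$, not as a step in proving Theorem~\ref{trg}. The Raynaud--Gruson flattening step needs a somewhat different statement (local freeness of a module finite over the base, detected by simultaneous control of all Fitting ideals, not just $\FFitt^0$), so if you were actually writing out the proof you would not be able to cite Lemma~2 of the paper for that purpose.
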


Recall the following
\begin{definition}\cite[definition 6.1.3]{GroDieu} the scheme morphism
$f: X\to Y$ is {\it quasi-separated} if the diagonal morphism
$\Delta_f: X \to X\times_Y X$ is quasi-compact. The scheme  $X$ is
{\it quasi-separated} if it is quasi-separated over $\Spec \Z.$
\end{definition}
If the scheme  $X$ is Noetherian, then any morphism
 $f: X\to Y$ is quasi-compact. Since we work in the category
 of Noetherian schemes, all morphisms of our interest and all arising schemes
 are quasi-compact.

Due to theorem \ref{trg}, there exist a $T_0$-admissible blowing
up $g: \widetilde T \to T$ such that inverse images of sheaves
$\OO_{\widehat \Sigma}$ and $\widehat \E$ are $\widetilde T$-flat.
Namely, in the notation fixed by the following fibred square
\begin{equation}\label{ressq}\xymatrix{\widetilde \Sigma \ar[d]^{\pi} \ar[r]^{\widetilde g}&\widehat \Sigma \ar[d]^f\\
\widetilde T \ar[r]^g & T}
\end{equation}
$\OO_{\widetilde \Sigma}=\widetilde g^{\ast} \OO_{\widehat
\Sigma}$ and $\widetilde \E:=\widetilde g^{\ast} \widehat \E$ are
flat $\OO_{\widetilde T}$-modules.

\begin{remark} Since  $\widehat \E$ is locally free as
 $\OO_{\widehat
\Sigma}$-module, it is sufficient to achieve that $\OO_{\widetilde
\Sigma}=\widetilde g^{\ast} \OO_{\widehat \Sigma}$ be flat as
$\OO_{\widetilde T}$-module. Then the locally free
$\OO_{\widetilde \Sigma}$-module $\widetilde g^{\ast} \widehat \E$
is also flat over $\widetilde T$.
\end{remark}

The epimorphism
\begin{equation}\label{epi}\widetilde g^{\ast}
\sigma\!\!\!\sigma^{\ast} \E \twoheadrightarrow \widetilde
\E\end{equation} induced by the right vertical arrow in
(\ref{depi}), provides quasi-ideality on closed fibres of the
morphism  $\pi$.

%Цитированные теоремы из \cite{RG} верны для любых схем конечного
%типа над полем.

The transformation of families we constructed, has a form $$(T,
\L, \E) \mapsto (\pi: \widetilde \Sigma \to \widetilde T,
\widetilde \L, \widetilde \E)$$ and is defined by the commutative
diagram
\begin{equation*}\xymatrix{ T \ar[d]_{g^o}\ar@{|->}[r]& \{(T, \L,
\E)\} \ar[d]\\
\widetilde T \ar@{|->}[r]& \{(\pi: \widetilde \Sigma \to
\widetilde T, \widetilde \L, \widetilde \E)\} }
\end{equation*}
where left vertical arrow is a morphism in the category
$(Schemes_k)^o$. This morphism is dual to the blowup morphism
 $g: \widetilde T \to T$. The right vertical arrow is the map of sets.
Their elements are families of objects to be parametrized. The map
is determined by the procedure of resolution as it developed in
this section.
\begin{remark} The morphism  $g$ is defined by the structure of
the concrete family of coherent sheaves under resolution but not
by the class of families which is an image of the scheme $T$ under
the functor ${\mathfrak f}^{GM}$. Then the transformation as it is
constructed now does not define a morphism of functors.
\end{remark}
\begin{remark} The resolution we constructed for families with
nonreduced base is not applicable to families without locally free
sheaves, because in the suggested procedure the result from
\cite{RG} is involved (theorem \ref{trg}). This result operates
with the notion of flatness in dimension $\ge n$. In particular,
the construction is not applicable to families of nonlocally free
sheaves with zero-dimensional base, as well as to investigate such
components of Gieseker -- Maruyama moduli scheme which do not
contain locally free sheaves.
\end{remark}

\section{Construction of  morphism of  functors}

To obtain the natural transformation of interest it is necessary
to show that flatness of the family $(\pi: \widetilde \Sigma \to
\widetilde T, \widetilde \L, \widetilde \E)$ over $\widetilde T$
implies that the family  $(pr_1 \circ \sigma\!\!\!\sigma: \widehat
\Sigma \to T, \widehat \L, \widehat \E)$ is also flat over $T$.

To solve the descent problem for the property of flatness of a
constructed family $(\pi: \widetilde \Sigma \to \widetilde T,
\widetilde \L, \widetilde \E)$ along the morphism  $g$ we need the
following
\begin{definition} The scheme morphism $h:X \to Y$
{\it has infinitesimal sections} if for any closed point $y\in Y$
and for any zero-dimensional subscheme $Z_Y \subset Y$ supported
at $y$ there is a zero-dimensional subscheme $Z_X \subset X$ such
that the induced morphism $h|_{Z_X}: Z_X \to Z_Y$ is an
isomorphism.
\end{definition}

It is clear that the morphism having infinitesimal sections is
surjective. Any blowup morphism has infinitesimal sections.
Indeed, for any ring  $A$, ideals $I \subset A \supset I_Y$ where
$I_Y$ is an ideal of a zero-dimensional subscheme in $\Spec A$,
and for a canonical morphism onto zeroth component $h^{\sharp}: A
\to \bigoplus_{s\ge 0} I^s$, the following diagram of $A$-modules
commutes
\begin{equation*} \xymatrix{A \ar@{->>}[d] \ar[r]^{h^{\sharp}}& \bigoplus_{s\ge 0} I^s \ar@{->>}[d]\\
A/I_Y \ar@{=}[r]& A/I_Y}
\end{equation*}
The right vertical arrow in this diagram is a projection on the
quotient module of zeroth component.

Now we need the analogue of the well known criterion of flatness
involving Hilbert polynomial, for the case of nonreduced base
scheme.

If  $t\in T$ is closed point of the scheme $T$ corresponding to
the sheaf of maximal ideals ${\mathfrak m}_t\subset \OO_X$, then
we denote $m$th infinitesimal neighborhood of the point $t\in T$
by the symbol $t^{(m)}$.  It is a subscheme defined in $T$ by the
sheaf of ideals ${\mathfrak m}_t^{m+1}$.

\begin{proposition}\label{critF} \cite[theorem 3]{TimAA} Let a
projective morphism of Noetherian schemes of finite type $f: X\to
T$ is include in the commutative diagram
\begin{equation*}\label{triangle}
\xymatrix{X \ar@{^(->}[r]^i \ar[rd]_f& \P^N_T \ar[d]\\
&T}
\end{equation*}
where $i$ is closed immersion. The coherent sheaf of
$\OO_X$-modules $\FF$ is flat with respect to $f$ (i.e. flat as a
sheaf of $\OO_T$-modules) if and only if for an invertible
$\OO_X$-sheaf $\LL$ which is very ample relatively to $T$ and such
that $\LL= i^{\ast} \OO(1),$ for any closed point  $t\in T$ the
function
$$
\varpi_t^{(m)}(\FF, n)=\frac{\chi (\FF \otimes
\LL^n|_{f^{-1}(t^{(m)})})}{\chi(\OO_{t^{(m)}})}
$$
does not depend of the choice of $t\in T$ and of $m\in \N$.
\end{proposition}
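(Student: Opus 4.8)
The plan is to prove both directions of the equivalence, the substance lying in the ``constancy $\Rightarrow$ flatness'' implication, which I reduce via the local criterion of flatness to a degreewise comparison of Hilbert polynomials on infinitesimal graded pieces. Throughout fix a closed point $t\in T$, write $A=\OO_{T,t}$, $\mathfrak m=\mathfrak m_t$, put $d_j=\dim_k(\mathfrak m^j/\mathfrak m^{j+1})$ and $\ell_m=\length\OO_{t^{(m)}}=\sum_{j=0}^m d_j$; since $k=\overline k$ and $t$ is closed, $d_0=1$. The fibre $\FF_t=\FF|_{f^{-1}(t)}$ and the pieces $\mathrm{gr}^m\FF:=\mathfrak m^m\FF/\mathfrak m^{m+1}\FF$ are coherent on the projective $k$-scheme $f^{-1}(t)$; set $P_m(n)=\chi(\mathrm{gr}^m\FF\otimes\LL^n)$. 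Filtering $\FF|_{f^{-1}(t^{(m)})}=\FF\otimes_A A/\mathfrak m^{m+1}$ by powers of $\mathfrak m$ gives $\chi(\FF\otimes\LL^n|_{f^{-1}(t^{(m)})})=\sum_{j=0}^m P_j(n)$, so $\varpi_t^{(m)}(\FF,n)=\bigl(\sum_{j=0}^m P_j(n)\bigr)/\ell_m$.

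For the direction $\FF$ flat $\Rightarrow$ $\varpi$ constant, flatness is stable under the base change $\Spec\OO_{t^{(m)}}\to T$, so $\FF|_{f^{-1}(t^{(m)})}$ is flat over the Artinian local ring $\OO_{t^{(m)}}$. Tensoring the filtration $0\subset\mathfrak m^m\subset\dots\subset\mathfrak m\subset A$, read modulo $\mathfrak m^{m+1}$, with this flat sheaf keeps the sequences exact and identifies its $j$-th graded piece with $\FF_t^{\oplus d_j}$, since $\mathfrak m^j/\mathfrak m^{j+1}\cong k^{d_j}$ and $\FF|_{f^{-1}(t^{(m)})}\otimes_A k=\FF_t$. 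Hence $P_m(n)=d_m P_0(n)$ and $\varpi_t^{(m)}(\FF,n)=P_0(n)=\chi(\FF_t\otimes\LL^n)$ for every $m$, manifestly independent of $m$. Independence of $t$ is the classical local constancy of the fibre Hilbert polynomial of a flat projective family; as $T_{\red}$ is irreducible, $T$ is connected and $P_0$ is constant on $T$.

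For the converse, fix $t$ and assume $\varpi_t^{(m)}(\FF,n)$ is independent of $m$. With $S_m=\sum_{j=0}^m P_j$ the equality $S_m/\ell_m=S_0/\ell_0=P_0$ telescopes to $P_m(n)=S_m-S_{m-1}=P_0(n)\,d_m$ for all $m$. There is a canonical surjection of coherent sheaves on $f^{-1}(t)$,
$$
c_m:\ (\mathfrak m^m/\mathfrak m^{m+1})\otimes_k\FF_t\ \twoheadrightarrow\ \mathrm{gr}^m\FF,
$$
carrying $\bar a\otimes\bar s$ to the class of $as$; its source is a sum of $d_m$ copies of $\FF_t$, with Hilbert polynomial $d_m P_0(n)$. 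The relation $P_m(n)=d_m P_0(n)$ says exactly that $c_m$ preserves Hilbert polynomials, so $\ker c_m$ is coherent with identically vanishing Hilbert polynomial; as $\LL$ is relatively very ample, such a sheaf is zero, and each $c_m$ is an isomorphism.

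Assembling the $c_m$ into the graded comparison map $\mathrm{gr}_{\mathfrak m}A\otimes_k\FF_t\to\mathrm{gr}_{\mathfrak m}\FF$ and reading it stalkwise, the local criterion of flatness (for the local homomorphism $A\to\OO_{f^{-1}(t),x}$ and a finitely generated module, $A$-flatness is equivalent to bijectivity of this map with $I=\mathfrak m$) shows $\FF_x$ is $A$-flat for every $x\in f^{-1}(t)$, i.e.\ $\FF$ is flat over $\OO_{T,t}$. Since $T$ is of finite type over a field, hence Jacobson, the open flat locus contains all closed points and therefore equals $X$, so $\FF$ is flat over $T$. The main obstacle is precisely this converse over a nonreduced base: bare constancy of the fibre polynomial $P_0$ no longer detects flatness, and the function $\varpi_t^{(m)}$ over the infinitesimal neighbourhoods supplies, through the telescoping identity $P_m=d_m P_0$, exactly the degreewise isomorphisms $c_m$ demanded by the local flatness criterion.
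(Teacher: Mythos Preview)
The paper does not prove Proposition~\ref{critF}: the result is quoted from the separate article \cite{TimAA}, and only the statement together with the two subsequent remarks (on the case $m=0$ and on replacing infinitesimal neighbourhoods by arbitrary punctual zero-dimensional subschemes) appears here. So there is no in-paper argument to compare yours against.

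That said, your proof is correct and is the natural one. The forward direction is the routine computation that flatness over the Artinian thickening $\OO_{t^{(m)}}$ forces $\mathrm{gr}^j\FF\cong\FF_t^{\oplus d_j}$, whence $\varpi_t^{(m)}=P_0$. The converse is where the content lies, and your reduction is the expected one: telescope the equality $\varpi_t^{(m)}\equiv P_0$ to $P_m=d_mP_0$; deduce that the canonical surjections $c_m\colon(\mathfrak m^m/\mathfrak m^{m+1})\otimes_k\FF_t\twoheadrightarrow\mathrm{gr}^m\FF$ have kernels with identically vanishing Hilbert polynomial on the projective fibre $f^{-1}(t)$, hence are isomorphisms; then feed the resulting graded isomorphism $\mathrm{gr}_{\mathfrak m}A\otimes_k\FF_t\xrightarrow{\sim}\mathrm{gr}_{\mathfrak m}\FF$ into the local criterion of flatness at each closed $x\in f^{-1}(t)$, and finish with openness of the flat locus plus the Jacobson property of $X$. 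One cosmetic remark: the proposition as stated does not assume $T_{\red}$ irreducible, so your appeal to that hypothesis for the ``independence of $t$'' clause in the forward direction borrows from the paper's ambient setup; strictly one argues component by component, invoking local constancy of the fibre Hilbert polynomial for a flat projective family.
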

\begin{remark} For $m=0$ we have
$\varpi_t^{(m)}(\FF, n)=\chi(\FF \otimes \LL^n|_{f^{-1}(t)})$.
This is Hilbert polynomial of the restriction of the sheaf $\FF$
to the fibre at the point $t$.
\end{remark}
\begin{remark}\label{allst} The proof done in  \cite[theorem
3]{TimAA} implies that all possible zero-dimensional subschemes
supported at the closed point $t$ can be considered instead of
infinitesimal neighborhoods of this point.
\end{remark}

\begin{proposition} Let we are given fibred diagram (\ref{ressq}) of
Noetherian schemes where  $f$ is projective morphism and the
morphism $g$ has infinitesimal sections. Let also the scheme
$\widehat \Sigma$ carry an invertible sheaf $\widehat \L$ which is
very ample relatively to $T$, and its inverse image $\widetilde
\L= \widetilde g ^{\ast} \widehat \L$ is flat relatively to the
morphism  $\pi$. Then the morphism $f$ is also flat and the sheaf
$\widehat \L$ is flat relatively to $f$.
\end{proposition}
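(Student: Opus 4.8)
The plan is to derive everything from the Hilbert-polynomial flatness criterion, Proposition~\ref{critF}, used once in the easy direction for $\pi$ and once in the hard direction for $f$, with the infinitesimal sections of $g$ serving to transport Euler characteristics across the cartesian square (\ref{ressq}). First I would observe that, since (\ref{ressq}) is cartesian and $f$ is projective, the morphism $\pi$ is projective as well and $\widetilde\L=\widetilde g^{\ast}\widehat\L$ is very ample relatively to $\widetilde T$; moreover $\widehat\L$ is very ample relatively to $T$ by hypothesis. Thus Proposition~\ref{critF} is applicable to both vertical morphisms of the square.

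Next, applying the ``only if'' part of Proposition~\ref{critF}, together with Remark~\ref{allst} which allows arbitrary zero-dimensional subschemes, to the $\pi$-flat sheaf $\widetilde\L$, I would record that the function
\[
\frac{\chi(\widetilde\L^{\,n}|_{\pi^{-1}(Z)})}{\chi(\OO_Z)}
\]
is one and the same polynomial $P(n)$ for every closed point of $\widetilde T$ and every zero-dimensional subscheme $Z\subset\widetilde T$ supported there. Then, given an arbitrary zero-dimensional $Z_T\subset T$ supported at a closed point $t$, I would use that $g$ has infinitesimal sections to choose $Z_{\widetilde T}\subset\widetilde T$ with $g|_{Z_{\widetilde T}}\colon Z_{\widetilde T}\stackrel{\sim}{\to}Z_T$. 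The cartesian property of (\ref{ressq}) gives
\[
\pi^{-1}(Z_{\widetilde T})=\widetilde\Sigma\times_{\widetilde T}Z_{\widetilde T}=\widehat\Sigma\times_T Z_{\widetilde T}\cong\widehat\Sigma\times_T Z_T=f^{-1}(Z_T),
\]
the isomorphism being induced by $\widetilde g$ and $g|_{Z_{\widetilde T}}$, and it carries $\widehat\L|_{f^{-1}(Z_T)}$ to $\widetilde g^{\ast}\widehat\L|_{\pi^{-1}(Z_{\widetilde T})}=\widetilde\L|_{\pi^{-1}(Z_{\widetilde T})}$ because $\widetilde\L=\widetilde g^{\ast}\widehat\L$. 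Hence $\chi(\widehat\L^{\,n}|_{f^{-1}(Z_T)})=\chi(\widetilde\L^{\,n}|_{\pi^{-1}(Z_{\widetilde T})})$ and $\chi(\OO_{Z_T})=\chi(\OO_{Z_{\widetilde T}})$, so that $\chi(\widehat\L^{\,n}|_{f^{-1}(Z_T)})/\chi(\OO_{Z_T})=P(n)$ independently of the choice of $t$ and of $Z_T$ (here I would also note that $g$, having infinitesimal sections, is surjective, so every closed point of $T$ and hence every such $Z_T$ is reached).

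Finally I would feed this back into Proposition~\ref{critF}, this time in the ``if'' direction, for the projective morphism $f$ and the relatively very ample sheaf $\widehat\L$: taking $\FF=\OO_{\widehat\Sigma}$ shows that $\OO_{\widehat\Sigma}$ is flat relative to $f$, i.e.\ $f$ is flat, and taking $\FF=\widehat\L$, which only shifts the function to $P(n+1)$, shows that $\widehat\L$ is flat relative to $f$. The step I expect to require the most care is the identification of the fibres in the cartesian square and the verification that the polarizations match under $\widetilde g$; once that compatibility is in place, both conclusions are immediate applications of the criterion.
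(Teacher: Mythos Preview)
Your proposal is correct and follows essentially the same route as the paper: both arguments use the cartesian property of (\ref{ressq}) together with an infinitesimal section of $g$ to identify $\pi^{-1}(Z_{\widetilde T})$ with $f^{-1}(Z_T)$ and thereby transport Euler characteristics, then invoke Proposition~\ref{critF} (with Remark~\ref{allst}) in both directions. The paper phrases the Euler-characteristic comparison via $h^0$ for $m\gg 0$ and only explicitly concludes flatness of $f$ (the flatness of the invertible sheaf $\widehat\L$ being immediate), but the logical structure is the same as yours.
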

\begin{proof}
To verify flatness we use the criterion of the proposition
\ref{critF}. Choose a closed point $t$ in $T$ and a
zero-dimensional subscheme $Z_t$, $\Supp Z_t=t$.

Now choose a zero-dimensional subscheme $Z'_t \subset \widetilde
T$ in the preimage  $g^{-1}Z_t$ such that this zero-dimensional
subscheme maps isomorphically to $Z_t$ under the morphism  $g$.
Consider a following commutative diagram
\begin{equation*}\xymatrix{
&\widetilde \Sigma \ar[dd]^>>>>>>>>\pi \ar[rr]^{\widetilde g}&&
\widehat
\Sigma \ar[dd]^f\\
\pi^{-1}Z'_t \ar[dd]_{\pi_t} \ar@{^(->}[ur]^{\widetilde i}
\ar[rr]^>>>>>>{\widetilde g_t}&& f^{-1}Z_t \ar@{^(->}[ur]^{\widehat i} \ar[dd]\\
&\widetilde T \ar[rr]^<<<<<<<<g&& T\\
Z'_t \ar@{^(->}[ur]^{i'} \ar[rr]_{\thicksim}^{g_t} &&Z_t
\ar@{^(->}[ur]^i}
\end{equation*}
where all skew arrows are closed immersions, left and right
parallelograms and  the rectangle containing $T$ are fibred. Usual
verifying of universality shows that the rectangle with $Z_t$ is
also fibred. This implies that
 $\widetilde g_t:=\widetilde
g|_{\pi^{-1}Z'_t}$ is an isomorphism. Then
$$
h^0(f^{-1}Z_t, \widehat \L^n|_{f^{-1}Z_t})= h^0(f^{-1}Z_t,
\widehat i^{\ast}\widehat \L^n)= h^0(\pi^{-1}Z'_t, \widetilde
g_t^{\ast}\widehat i^{\ast}\widehat \L^n) $$
$$=h^0(\pi^{-1}Z'_t,
\widetilde i^{\ast} \widetilde g^{\ast} \widehat \L^n)=
h^0(\pi^{-1}Z'_t, \widetilde i^{\ast}  \widetilde \L^n)=
h^0(\pi^{-1}Z'_t,  \widetilde \L^n|_{\pi^{-1}Z'_t}).
$$
In particular if  $Z_t=t$ and $Z'_t=\widetilde t$, $g(\widetilde
t)=t$ are reduced points, then Hilbert polynomials of fibres $\chi
(\widehat \L^n|_{f^{-1}(t)})$ and $\chi(\widetilde
\L^n|_{\pi^{-1}(\widetilde t)})$ coincide.

Then for $m\gg 0$ we have $$\chi(\widehat
\L^m|_{f^{-1}Z_t})=h^0(f^{-1}Z_t, \widehat
\L^m|_{f^{-1}Z_t})=h^0(\pi^{-1}Z'_t, \widetilde
\L^m|_{\pi^{-1}Z'_t})=\chi(\widetilde \L^m|_{\pi^{-1}Z'_t}).$$ By
the proposition \ref{critF} in view of the remark  \ref{allst},
since $\pi$ is flat morphism and $\widetilde \L$ provides equal
Hilbert polynomials on its fibres, then $$\chi(\widetilde
\L^m|_{\pi^{-1}Z'_t})=\chi ( \widetilde \L^m|_{\pi^{-1}(\widetilde
t)}) \length (Z'_t)=\chi ( \widehat \L^m|_{f^{-1}(t)})\length
(Z_t)=\chi(\widehat \L^m|_{f^{-1}Z_t}).$$ Hence the morphism  $f$
is also flat what completes the proof of the proposition.
\end{proof}

Then $(T, f: \widehat \Sigma \to T, \widehat \L, \widehat \E)$ is
the required family of semistable admissible pairs with base  $T$.
The performed construction defines the natural transformation of
the functor of semistable torsion-free coherent sheaves to the
functor of admissible semistable pairs and hence it completes the
proof of the theorem 1.

\section{Morphism of moduli schemes}

The developed procedure of standard resolution for a family of
semistable torsion-free coherent sheaves with possibly nonreduced
base, allows to construct the morphism of the Gieseker -- Maruyama
moduli scheme to the moduli scheme of admissible semistable pairs
without categorical considerations.

According to the classical construction of Gieseker -- Maruyama
moduli scheme $\overline M$, choose an integer $m \gg 0$ such that
for each semistable coherent sheaf $E$ the morphism $H^0(S, E
\otimes L^m) \otimes L^{-m} \to E$ is surjective, $H^0(S, E\otimes
L^m)$ is a  $k$-vector space of dimension  $rp(m)$ and  $H^i(S,
E\otimes L^m)=0$ for all $i>0$.

Then take a $k$-vector space  $V$ of dimension $rp(m)$ and form a
Grothendieck scheme  $\Quot ^{rp(n)} (V\otimes L^{-m})$ of
coherent $\OO_S$-quotient sheaves of the form $V \otimes L^{-m}
\twoheadrightarrow E$. Consider its quasi-projective subscheme  $Q
\subset \Quot^{rp(n)} (V\otimes L^{-m})$, corresponding to those
 $E$ which are Gieseker-semistable and torsion-free.

The scheme  $\Quot^{rp(n)}(V \otimes L^{-m})$ is acted upon by the
algebraic group  $PGL(V)$ by linear transformations of the vector
space  $V$. The scheme $\overline M$ is obtained as a (good)
GIT-quotient of the subscheme  $Q$ (which is immersed
$PGL(V)$-equivariantly in $\Quot^{rp(n)}(V \otimes L^{-m})$), by
the action of $PGL(V):$ $\overline M=Q//PGL(V).$

By \cite{Tim6}, the scheme $\widetilde M$ is built up in analogous
way. For an admissible semistable pair $((\widetilde S, \widetilde
L), \widetilde E)$ for $m\gg 0$ there is a closed  immersion  $j:
\widetilde S \hookrightarrow G(V,r)$, determined by the
epimorphism of locally free sheaves $H^0(\widetilde S, \widetilde
E \otimes \widetilde L^m) \boxtimes \widetilde L^{-m} \to
\widetilde E$. Here $G(V,r)$ is Grassmann variety of
$r$-dimensional quotient spaces of the vector space $V$.

Let  $\OO_{G(V,r)}(1)$ be a positive generator of the group $\Pic
G(V,r)$, then \linebreak $P(n):= \chi (j^{\ast} \OO_{G(V,r)}(n))$
is a Hilbert polynomial of the closed subscheme  $j(\widetilde S)
$. Fix the polynomial $P(n)$ and consider the Hilbert scheme
$\Hilb^{P(n)}G(V,r)$ of subschemes in $G(V,r)$ with Hilbert
polynomial equal to $P(n)$. Let $H_0$ be quasi-projective
subscheme in $\Hilb^{P(n)}G(V,r)$ whose points correspond to
admissible semistable pairs. The Grassmann variety $G(V,r)$ and in
induced way the Hilbert scheme $\Hilb^{P(n)}G(V,r)$ are acted upon
by the group $PGL(V)$. This action, as in the case with Gieseker
-- Maruyama compactification, is inspired by linear
transformations of the vector space $V$. The scheme $\widetilde M$
is obtained as a (good)  GIT-quotient of the scheme $H_0$ by the
action of the group $PGL(V)$: $\widetilde M=H_0 // PGL(V)$.

To construct a morphism  $\overline M \to \widetilde M$ we recall
that the Grothendieck' scheme $\Quot:=\Quot^{rp(n)}(V \otimes
L^{-m})$ carries a universal family of quotient sheaves
$$
V \boxtimes \OO_{\Quot \times S} \twoheadrightarrow \E_{\Quot}.
$$
Restrict it to the subscheme $Q\times S \subset \Quot\times S$:
$$
V \boxtimes \OO_{Q\times S} \twoheadrightarrow \E_Q.
$$
The sheaf of $\OO_{Q\times S}$-modules $\E_Q:=
\E_{\Quot}|_{Q\times S}$ provides a family of coherent semistable
torsion-free $\OO_S$-sheaves with base scheme $Q$.

Applying the procedure of standard resolution as developed in this
article, to $(\Sigma=Q\times S, \L=\OO_Q \boxtimes L,\E=\E_Q)$, we
come to the collection of data $((\widehat \Sigma_Q, \widehat
\L_Q), \widehat \E_Q)$. By the universal property of the Hilbert
scheme $\Hilb^{P(n)}G(V,r)$, the family $((\widehat \Sigma_Q,
\widehat \L_Q), \widehat \E_Q)$ induces a morphism $\widehat
\Sigma_Q \to \Univ^{P(n)}G(V,r)$ into the universal subscheme
$$\Univ^{P(n)}G(V,r)\subset \Hilb^{P(n)}G(v,r)\times S,$$ and a
morphism of the base scheme $\mu: Q\to \Hilb^{P(n)}G(V,r)$. These
morphisms are included into the following commutative diagram with
fibred square
\begin{equation*}\xymatrix{\widehat \Sigma_Q \ar[d]_{f} \ar[r]&
\Univ^{P(n)}G(V,r) \ar[d] \ar@{^(->}[r]& \ar[ld]^{pr_1}\Hilb^{P(n)}G(V,r)\times G(V,r)\\
Q \ar[r]^<<<<<{\mu}& \Hilb^{P(n)}G(V,r)}
\end{equation*}
Hence the morphism  $\mu$ decomposes as
$$Q \twoheadrightarrow \mu(Q) \hookrightarrow \Hilb^{P(n)} G(V,r).$$

Now insure that this composite factors through the subscheme
 $H_0\subset \Hilb^{P(n)} G(V,r)$ of admissible
semistable pairs.

Firstly, existence and structure of a morphism $\sigma \!\!\!
\sigma: \widehat \Sigma_Q \to Q \times S$ as a blowing up morphism
in the sheaf of Fitting ideals (\ref{fitt}) guarantees that the
family
 $\widehat \Sigma_Q \to Q$ is formed by admissible schemes.

Secondly, resolution of the sheaf $\E_Q$ into the sheaf $\widehat
\E_Q$ provides Gieseker's semi\-stability of locally free sheaves
in the family $\widetilde \E_Q$. Indeed, in \cite{Tim4} it  is
proven that this is true in the case when the base $Q$ of the
family is considered with reduced scheme structure: $Q=Q_{\red}$.
Since Gieseker's semistability of a coherent sheaf is open
condition in flat families, then if the image of $Q_{\red}$ under
the morphism $\mu: Q \to \Hilb^{P(n)}G(V,r)$ belongs to the open
subset of semistable sheaves, the same is true for the whole of
the scheme $Q$.

Thirdly, the epimorphism  (\ref{epi}) provides quasi-ideality of
sheaves in the family  $\widehat \E_Q$. Applying resolution from
section 2 to the case $T=Q$, we come to quasi-ideality on the
image of the scheme $Q$ under the resolution.

The procedure of resolution constructed in Section 2 and applied
to the quasi-projective scheme  $Q$, leads to the family of
admissible semistable pairs $((f: \widehat \Sigma \to Q, \widehat
\L:={\sigma\!\!\! \sigma}^{\ast}\L \otimes {\sigma\!\!\!
\sigma}^{-1}I \cdot \OO_{Q \times S}), \widehat \E))$. This family
fixes a subscheme $\mu(\widetilde Q)\subset H_0$ in
$\Hilb^{P(n)}G(V,r)$.  It defines
 $PGL(V)$-equivariant composite $Q
\twoheadrightarrow \mu(Q) \subset H_0$. This composite leads to
the morphism of GIT-quotients $\kappa: \overline M \to \widetilde
M$.

\bigskip

\end{document}